\pgfplotsset{width=7cm,compat=newest}
\theoremstyle{plain}
\newtheorem{thm}{Theorem}[section]
\theoremstyle{definition}
\newtheorem{rem}[thm]{Remark}
\newtheorem{exa}[thm]{Example}
\newtheorem{dfn}{Definition}[section]
\newcommand{\be}{\begin{equation}}
\newcommand{\ee}{\end{equation}}
\newcommand{\ben}{\begin{equation*}}
\newcommand{\een}{\end{equation*}}
\newcommand{\ba}{\begin{equation}\begin{aligned}}
\newcommand{\ea}{\end{aligned}\end{equation}}
\newcommand{\ban}{\begin{equation*}\begin{aligned}}
\newcommand{\ean}{\end{aligned}\end{equation*}}
\renewcommand{\i}{\mathrm{i}}
\newcommand{\ex}{\mathrm{e}}
\newcommand{\di}{\mathrm{d}}
\newcommand{\eex}{\mathbf{e}}
\renewcommand{\phi}{\varphi}
\newcommand{\cP}{\mathcal{P}}
\newcommand{\cQ}{\mathcal{Q}}
\newcommand{\rF}{\mathscr{F}}
\newcommand{\bF}{\mathbb{F}}
\newcommand{\bI}{\mathbb{I}}
\newcommand{\bR}{\mathbb{R}}
\newcommand{\E}{\mathbf{E}}
\renewcommand{\P}{\mathbf{P}}
\begin{document}
 
\title{First Order Linear Marcus SPDEs}

\author{Lena--Susanne Hartmann\footnote{Institute of Mathematics, Friedrich Schiller University Jena, Ernst--Abbe--Platz 2,
07743 Jena, Germany;
 lena.hartmann@uni-jena.de} \ and Ilya Pavlyukevich\footnote{Institute of Mathematics, Friedrich Schiller University Jena, Ernst--Abbe--Platz 2,
07743 Jena, Germany; ilya.pavlyukevich@uni-jena.de}}

\maketitle

\begin{abstract}
 In this paper we solve a L\'evy driven linear stochastic first order partial differential equation (transport equation) understood in the canonical 
(Marcus) form.
The solution can be obtained with the help of the method
of stochastic characteristics. It has the same form as a solution of a deterministic PDE or a solution of a stochastic PDE driven by a Brownian motion
studied by Kunita (1984, 1997). 
\end{abstract}

\noindent 
\textbf{Keywords:} Canonical (Marcus) SDE; linear first order partial differential equation; transport equation; 
stochastic characteristics.

\noindent 
\textbf{AMS Subject Classification:} 35F10, 35R60, 60G51, 60H05 60H15

\section{Introduction}

The transport of a contaminant in an incompressible fluid can be described with the help of the first order linear partial differential equation 
(transport equation)
\ba
\label{e:tr}
\partial_t u(x,t)&=\nabla^T u(x,t)  a(x,t), \quad x\in\bR^d,\ t>0,\\
u(x,0)&=u_0(x),
\ea
where $u(x,t)$ is the concentration of the contaminant at time instant $t$ at the position $x\in\bR^d$, and $-a(x,t)$ is the flow instant velocity
at the position $x$ at time $t$, see e.g.\ Debnath \cite{debnath2005} and Van der Perk \cite{van2007soil}.

The equation \eqref{e:tr} can be solved with the help of the method of characteristics. 
Indeed, consider the so-called characteristics ordinary differential equation 
\ba
\label{e:ode}
\di \phi_{0,t}(x)&= -a(\phi_{0,t}(x),t)\, \di t,\quad \phi_{0,0}(x)=x,\quad t\geq 0.
\ea
Under usual smoothness and boundedness assumptions on the function $a$, the ODE \eqref{e:ode} has a unique global solution and
$t\mapsto\phi_{0,t}(\cdot)$ is a flow of diffemorphisms on $\bR^d$.
Then it is easy to check that the unique solution of the transport equation \eqref{e:tr} is given by the formula
\ba
\label{e:sold}
u(x,t)=u_0(\phi_{t,0}(x)),
\ea
where $x\mapsto \phi_{t,0}(x)=\phi^{-1}_{0,t}(x)$ is the inverse of the map 
$x\mapsto \phi_{0,t}(x)$, see e.g.\ Chapter 6 in Perthame \cite{perthame2006transport}.

Analogously one can show that the more general equation with the
``sediment deposition''  term $ub$ (if $b\leq 0$)
and
the ``sink/source'' term $c$ 
\ba
\label{e:trGen}
\partial_t u(x,t)&=\nabla^T u(x,t)  a(x,t) + u(x,t)b(x,t)+ c(x,t)  , \quad x\in\bR^d,\ t>0,\\
u(x,0)&=u_0(x),
\ea
has the solution 
\ba
\label{e:solGen}
u(x,t)=\ex^{\int_0^t b(\phi_{t,s}(x) ,s)\,\di s } u_0( \phi_{t,0}(x) ) 
+    \int_0^t  \ex^{\int_s^t b(\phi_{t,r}(x) ,r) \,\di r} c(  \phi_{t,s}(x) ,s  ) \,\di s,
\ea
see Section \ref{s:proof} for detail.

In a turbulent flow, the fluid velocity is given by a turbulent solution of Navier--Stokes equations
that is very difficult to analyze.
Instead, one can consider a plausible significant simplification, namely one can assume that the fluid velocity is represented as a sum
of the mean velocity field and a spatially dependent noise, see Man and Tsai \cite{ManTsai07}, Wang and Zheng \cite{WangZheng05}.
This yields the stochastic equation
\ban
\partial_t u(x,t)&=\nabla^T u(x,t)  a(x,t)+ \nabla^T u(x,t)  A(x,t) \circ \dot  W(t),\\
u(x,0)&=u_0(x). 
\ean
Here $W$ is a Brownian motion, and $\circ\, \dot W$ denotes the noise term in the Stratonovich sense.
Introducing the characteristics SDE in the Stratonovich form
\ban
\di \phi_{0,t}(x)&= a(\phi_{0,t}(x),t)\, \di t+ A(\phi_{0,t}(x),t)\circ \di W_t,\quad \phi_{0,0}(x)=x,
\ean
we obtain the same representation \eqref{e:sold} for the solution where $t\mapsto \phi_{t,0}(\cdot)$ is now the inverse flow of stochastic diffemorphisms.  
The general theory can be found in Kunita \cite{Kunita84,Kunita97} as well as in Chow \cite{chow2015stochastic},  Lototsky and Rozovsky \cite{lototsky2017stochastic}, and Flandoli \cite{flandoli2011interaction,Flandoli-2011}.

The goal of this paper is to show that a L\'evy driven first order equation can be also solved by the method of stochastic characteristics and hence to
extend the results by Kunita \cite{Kunita84,Kunita97} to the case of L\'evy driven equations.
Apart from the mathematical interest, this study is motivated by the questions appearing in the modelling of contaminant transport. Considering  
Le\'vy noise allows one to incorporate such effects as asymmetry of flow fluctuations and presence of extreme and large flow perturbations,
see, e.g.\ Kallianpur and Xiong \cite{kallianpur1994stochastic},
Birnir \cite[Section 1.7]{birnir2013kolmogorov}, Oh and Tsai \cite{OhTsai-10}, and Tsai and Huang \cite{tsai2019modeling}.

A general theory for L\'evy-driven stochastic partial differential equations can be found in Peszat and Zabczyk \cite{PesZab07}, Mandrekar and R{\"u}diger \cite{mandrekar2015stochastic}
(evolution equation approach), and Holden et al.\ \cite{holden2010stochastic} (white noise theory approach).
It should be noted that
in order to have the formulae \eqref{e:sold} and \eqref{e:solGen} valid in the case of a L\'evy-driven equation, 
we have to understand the stochastic term in the canonical (Marcus) form. 
This approach is different from above mentioned approaches, in particular from those by Proske \cite{Proske-04} who obtained a solution of a transport equation driven by a multiparameter 
L\'evy process 
within the frame of the white noise theory.

In the present paper we will assume that all the coefficients of the transport equation are sufficiently smooth. There is vast literature 
devoted to the analysis of (stochastic) transport equations with irregular coefficients in the countinous setting, see, e.g.\ 
DiPerna and Lions \cite{diperna1989ordinary},
Ambrosio \cite{Ambrosio-08},
Flandoli \cite{flandoli2011interaction,Flandoli-2011}, 
Fedrizzi and Flandoli \cite{fedrizzi2013noise}, 
Fedrizzi et al.\ \cite{fedrizzi2018class},
Catuogno and Olivera \cite{catuogno2013lp},
Olivera and Tudor \cite{olivera2015density},
Mollinedo and Olivera \cite{mollinedo2017well}.
To our best knowledge, L\'evy-driven transport equations with irregular coefficients are still to be studied.

\medskip

\noindent
\textbf{Notation.} By $C^n$ we denote $n$ times continuously differentiable functions and by 
$C^n_b$ we denote bounded $n$ times continuously differentiable functions with bounded derivatives. For a mapping 
$F\colon \bR^d\mapsto \bR^m$, $DF$ is the Jacobian (gradient) matrix, namely, 
\ban
DF=\begin{pmatrix}
    \frac{\partial F_1}{\partial x_1}& \cdots &\frac{\partial F_1}{\partial x_d}\\
    \cdots& \cdots& \cdots\\
    \frac{\partial F_m}{\partial x_1}& \cdots &\frac{\partial F_m}{\partial x_d}\\
   \end{pmatrix},
\ean
and in particular, for $F\colon \bR^d\mapsto \bR$, $DF=(\frac{\partial F}{\partial x_1}, \cdots ,\frac{\partial F}{\partial x_d})=\nabla^T F$.
The exponential function is denoted by $x\mapsto \ex^x$ whereas the 
exponential map in the sense of \eqref{e:expmap}, \eqref{e:expmap1}, \eqref{e:h2} and \eqref{e:h2} is denoted by the bold symbol $\eex$.

\medskip

\noindent
\textbf{Acknowledgements.} The authors acknowledge support
 by the DFG project PA 2123/4-1. The authors are grateful to the referees for their valuable comments.

\section{Setting and the main results}

On a filtered probability space $(\Omega,\rF,\bF,\P)$ that satisfies the usual hypotheses we consider an $m$-dimensional Brownian motion $W$ and an $m$-dimensional pure jump L\'evy process $Z$,
\ban
Z_t=\int_0^t\int_{\|z\|\leq 1} z\tilde N(\di z,\di s)+ \int_0^t\int_{\|z\|> 1} z  N(\di z,\di s),
\ean
where $N$ is a Poisson random measure with the intensity L\'evy measure $\nu$, $\nu(\{0\})=0$, $\int_{\bR^m}(|z|^2\wedge 1)\,\nu(\di z)<\infty$ and 
$\tilde N$ is the compensated Poisson random measure, $\tilde N(\di z,\di t)=N(\di z,\di t)-\bI(|z|\leq 1)\nu(\di z)\di t$.
For the exposition of the theory of L\'evy processes and L\'evy driven stochastic calculus we refer the reader to e.g.\ 
Sato \cite{Sato-99}, Kunita \cite{Kunita-04} and Applebaum \cite{Applebaum-09}.

Let
\ban
a &\colon\bR^d\to \bR^{d},&&&  b,c &\colon\bR^d\to \bR,\\
A,\alpha &\colon\bR^d\to \bR^{d\times m},&&& B,C,\beta,\sigma &\colon\bR^d\to \bR^{m},
\ean
be measurable functions.
For $u\in C^{1}(\bR^d,\bR)$, consider first order operators 
\ban
\cP u(x)& = \nabla^T u(x)  a(x) + u(x)b(x)+ c(x),\\
\mathcal R u(x)& = \nabla^T  u(x)  A(x) + u(x)B(x)+ C(x),\\
\mathcal Q u(x)& = \nabla^T u(x)  \alpha (x) + u(x)\beta(x)+ \sigma (x),\\
\ean
and the first order linear equation written in the compact differential form as
\ba
\label{e:SPDE}
\di u(t,x)&=\cP u(t,x)\,\di t+ \mathcal R u(t,x)\circ\di W_t + \mathcal Q u(t,x)\diamond\di Z_t,\\
u(0,x)&=u_0(x),\quad x\in\bR^d,\ t\in[0,T],
\ea
with some initial condition $u_0\colon\bR^d\to\bR$.
More precisely, this equation is understood as the integral equation as follows:
\ba
\label{e:SPDE-2}
u(t,x)&=u_0(x)+\int_0^t\Big[ \nabla^T u(s,x)a(x)  + u(s,x)b(x)+ c(x)\Big]\,\di s\\ 
&+   \int_0^t\Big[  \nabla^T u_{x}(s,x)A(x)  +  u(s,x) B (x)  + C (x)\Big]\circ\di W_s
\\ 
&+ \int_0^t\int_{\|z\|\leq 1} \Big(\eex^{\cQ z } u(s-,x) - u(s-,x) \Big) \tilde N(\di z,\di s)\\
&+ \int_0^t\int_{\|z\|\leq 1} \Big(\eex^{\cQ z} u(s-,x) - u(s-,x) - \mathcal Q u(s-,x) z \Big)\, \nu(\di z)\,\di s\\
&+  \int_0^t\int_{\|z\|> 1} \Big(\eex^{\cQ z } u(s-,x) - u(s-,x) \Big) \tilde N(\di z,\di s),
\ea
where for each $z\in \bR^m$ the mapping
\ba
\label{e:expmap}
u(\cdot )\mapsto \eex^{\cQ z} u(\cdot)
\ea
is defined with the help of the solution of the first order linear time autonomous partial differential equation
\ba
\label{e:g}
\partial_r g(r,x)&= \nabla^T g(r,x)  \alpha (x)z + g(r,x)\beta(x)z+ \sigma (x)z  ,\quad  r\in[0,1],\\
g(0,x)&=u(x),
\ea
and
\ba
\label{e:expmap1}
\eex^{\cQ z}u(x): =g(1;x,z).
\ea
It is possible to write down the solution of the equation \eqref{e:g} explicitly. We will do this in Section \ref{s:proof}.

The term $\circ\,\di W$ denotes the Stratonovich integral w.r.t.\ the Brownian motion.

\begin{dfn}
A random field $u\colon \bR^d\times \bR_+\times \Omega\to\bR$ is a solution of equation \eqref{e:SPDE}
if $u$ is a c\`adl\`ag $C^2$-semimartingale and \eqref{e:SPDE-2} is satisfied a.s.
\end{dfn}

The goal of this paper is to solve \eqref{e:SPDE} with the help of the method of characteristics. The main result is as follows.
Assume that the functions satisfy the following conditions.

\smallskip 

\noindent
\textbf{H}$_{\text{coeff}}$
\ba
\label{e:BDab}
a&\in C^3_b(\bR^d,\bR^{d})\quad \text{and}\quad b,c\in C^3_b(\bR^d,\bR) \\
A,\alpha &\in C^4_b(\bR^d,\bR^{d\times m})  \quad \text{and}\quad  B,C,\beta,\sigma \in C^4_b(\bR^d,\bR^{m}). 
\ea
Consider a $(d+2)$-dimensional  system of Marcus SDEs (characteristics equations)
\begin{align}
&\begin{aligned}
\label{e:phi}
\phi_{0,t}(x) =x -  \int_0^t a (\phi_{0,r}(x))\, \di r 
&- \int_0^t A  (\phi_{0,r}(x))\circ \di W_r \\
&-  \int_0^t \alpha(\phi_{0,r}(x)) \diamond \di Z_r,
\end{aligned}\\
&\begin{aligned}
\label{e:xi}
\xi_{0,t}(x,\xi_0)= \xi_0- \int_0^t \xi_{0,r}(x,\xi_0) b( \phi_{0,r}(x) )\,\di r
&-\int_0^t  \xi_{0,r}(x,\xi_0) B(\phi_{0,r}(x))\circ \di W_r\\
&-\int_0^t\xi_{0,r}(x,\xi_0)\beta(\phi_{0,r}(x))\diamond \di Z_r,
\end{aligned} \\
&\begin{aligned}
\label{e:zeta}
\zeta_{0,t}(x,\xi_0,\zeta_0)=\zeta_0
-\int_0^t \xi_{0,r}(x,\xi_0)c(\phi_{0,r}(x))\,\di r 
&- \int_0^t \xi_{0,r}(x,\xi_0)C(\phi_{0,r}(x))\circ\di W_r\\
&- \int_0^t \xi_{0,r}(x,\xi_0)\sigma (\phi_{0,r}(x))\diamond \di Z_r.
\end{aligned} 
\end{align}
Under the assumptions \textbf{H}$_{\text{coeff}}$,
there exists a unique strong solution to the SDE \eqref{e:phi}, \eqref{e:xi}, \eqref{e:zeta}. 
Furthermore the associated solution flow 
is a $C^{2}$-flow of diffeomorphisms of $\bR^{d+2}$, see Applebaum \cite[Theorems 6.10.5 and 6.10.10]{Applebaum-09}.

Let $(\phi_{t,0},\xi_{t,0},\zeta_{t,0})$ be the inverse flow of $(\phi_{0,t},\xi_{0,t},\zeta_{0,t})$.

\begin{thm}
\label{t:main}
Let assumptions \emph{\textbf{H}$_{\text{coeff}}$} hold true and let
$u_0\in C^2_b(\bR^d,\bR)$. Then the function 
\ba
\label{e:sol}
u(t,x):=\xi_{t,0}(x,1)u_0(\phi_{t,0}(x))+\zeta_{t,0}(x,1,0)
\ea
is the unique solution of \eqref{e:SPDE}.  
\end{thm}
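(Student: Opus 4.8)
The plan is to verify directly that the candidate field \eqref{e:sol} satisfies the integral equation \eqref{e:SPDE-2}, and then to argue uniqueness separately. The natural strategy is to treat the deterministic case first, as a warm-up and template: one computes $\partial_t$ of the right-hand side of \eqref{e:solGen}, using the flow property $\phi_{t,s}\circ\phi_{s,0}=\phi_{t,0}$ and the backward equation $\partial_t\phi_{t,0}(x)=(D\phi_{t,0}(x))\,a(x)$ satisfied by the inverse flow, and checks that \eqref{e:trGen} holds. This identifies the algebraic identities one will need in the stochastic setting and is essentially the content promised for Section~\ref{s:proof}.

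For the stochastic case, first I would rewrite \eqref{e:sol} in a form amenable to the Marcus/Itô calculus: the triple $(\phi_{t,0},\xi_{t,0},\zeta_{t,0})$ is the inverse of the flow solving \eqref{e:phi}--\eqref{e:zeta}, and since \eqref{e:xi}--\eqref{e:zeta} are linear in $(\xi,\zeta)$, the inverse processes $\xi_{t,0}(x,1)$ and $\zeta_{t,0}(x,1,0)$ themselves solve explicit linear Marcus SDEs in $t$ driven by $W$ and $Z$ with coefficients evaluated along $\phi_{t,0}(x)$; here one uses Kunita's theory (as cited, Applebaum Theorems 6.10.5 and 6.10.10) for the backward flow and its spatial regularity. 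The key structural fact is that for the Marcus equation the exponential map $\eex^{\cQ z}$ defined through \eqref{e:g}--\eqref{e:expmap1} is exactly the time-one solution operator of the autonomous first-order PDE, and the interlacing/Marcus correction built into \eqref{e:SPDE-2} is precisely what makes the chain rule hold across jumps: at a jump time $s$ of $Z$ with jump size $\Delta Z_s=z$, the composition $u(s-,x)\mapsto \eex^{\cQ z}u(s-,x)$ acts on the candidate by moving $\phi$ along the characteristic flow generated by $\alpha(\cdot)z$ over unit time and multiplying/shifting by the corresponding $\xi,\zeta$ factors — which is the jump of \eqref{e:sol}. So I would establish, as a lemma, a Marcus–Itô formula for the composition $(t,y)\mapsto \xi_{t,0}(x,1)\,u_0(y)+\zeta_{t,0}(x,1,0)$ evaluated at $y=\phi_{t,0}(x)$, differentiating in $t$ with $x$ fixed, treating $u_0\in C^2_b$ and using the $C^2$-regularity of the flow to justify the spatial derivatives that appear when one reads off $\nabla^T u(t,x)$ and substitutes back. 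Matching the $\di t$, $\circ\di W$, and $\diamond\di Z$ brackets against $\cP u$, $\mathcal R u$, $\mathcal Q u$ respectively is then a finite computation using the backward-flow identities, the linearity of \eqref{e:xi}--\eqref{e:zeta}, and the defining property of $\eex^{\cQ z}$.

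For uniqueness, given two solutions $u^{(1)},u^{(2)}$ in the sense of the Definition, their difference $v$ solves the homogeneous version of \eqref{e:SPDE} (drop $c,C,\sigma$ and the inhomogeneity, i.e. $\zeta\equiv 0$), and one runs the method of characteristics forward: compose $v(t,\phi_{0,t}(x))$ with the forward flow and the multiplicative factor $\xi$, apply the same Marcus–Itô formula, and observe that all stochastic and drift terms cancel by construction, so $t\mapsto \xi_{0,t}^{-1}(x,1)\,v(t,\phi_{0,t}(x))$ is constant and equals $v(0,\cdot)=0$; invertibility of $\xi$ (it solves a linear Marcus SDE, hence is a.s. nonzero via the Marcus exponential of an integrable process) gives $v\equiv 0$.

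The main obstacle I expect is the bookkeeping of the Marcus correction terms in the jump part: one must check that the nonlinear action $u\mapsto\eex^{\cQ z}u$ — which is an honest flow map, not a linear perturbation — composes correctly with $\phi,\xi,\zeta$ so that the ``predictable'' compensator term in \eqref{e:SPDE-2} (the $\nu(\di z)\,\di s$ integral with the $-\mathcal Q u(s-,x)z$ subtraction) matches the drift produced by the Marcus SDEs \eqref{e:phi}--\eqref{e:zeta} when converted to Itô form. Equivalently, one needs the identity that the time-one map of \eqref{e:g} applied to the composed field equals the composed field with $\phi$ advanced by the time-one map of $\di\phi=-\alpha(\phi)z\,\di r$ and $\xi,\zeta$ advanced by the corresponding linear ODEs — a statement about commutation of the characteristic flow of the autonomous PDE with the evaluation map, which should follow from uniqueness for \eqref{e:g} but needs to be stated carefully. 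The Stratonovich ($\circ\,\di W$) terms are comparatively routine since Kunita's calculus handles them exactly as in the diffusion case.
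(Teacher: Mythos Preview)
Your proposal is correct and follows the same overall route as the paper: verify that \eqref{e:sol} satisfies \eqref{e:SPDE-2} via a Marcus--It\^o chain rule along the inverse characteristic flow, identify the jump action $\eex^{\cQ z}$ with the time-one map of the characteristic ODEs, and prove uniqueness by composing an arbitrary solution with the forward flow $\phi_{0,t}$. The paper organises the existence step slightly differently from what you sketch: rather than writing down separate backward SDEs for $\phi_{t,0},\xi_{t,0},\zeta_{t,0}$ and assembling them, it bundles the triple into a single $(d{+}2)$-dimensional Marcus SDE with state $(x,\xi_0,\zeta_0)$, sets $\Theta(x,\xi_0,\zeta_0)=\xi_0 u_0(x)+\zeta_0$, and applies a general It\^o formula for the inverse flow (Theorem~\ref{t:Theta}) to $\Theta$ of the inverse; the resulting canonical SDE has coefficients evaluated at the \emph{fixed} spatial point $x$ --- not along $\phi_{t,0}(x)$ as you write --- which is exactly the structure of \eqref{e:SPDE-2}, and the linearity in $(\xi_0,\zeta_0)$ collapses $\partial_{\xi_0}u,\partial_{\zeta_0}u$ to $u(\cdot,\cdot,1,0)$ and $1$, closing the equation at $(\xi_0,\zeta_0)=(1,0)$. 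Your component-wise route would arrive at the same place but with heavier bookkeeping, and the backward-flow SDE you defer to ``Kunita's theory'' is in fact proved here from scratch (via the generalized It\^o formula of Theorem~\ref{t:Ito}); the commutation identity you correctly flag as the main obstacle is precisely the paper's $\eex^{\cQ z}\Theta(\cdot)=\Theta(\eex^{-\Sigma(\cdot)z}(\cdot))$, which follows from the explicit triangular structure of $\Sigma$.
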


The intuition behind the formula \eqref{e:sol} is quite clear. In the deterministic case, $A=\alpha=0$, $B=C=\beta=\sigma=0$, the formula
\eqref{e:sol} is exactly the formula \eqref{e:solGen} and is well known. 
In the continuous case of Gaussian noise, $\alpha=0$, $B=\beta=0$, the formula \eqref{e:sol} was derived by Kunita, see 
\cite{Kunita84,Kunita97}. The important feature of the equation \eqref{e:SPDE} in the continuous case is that the stochastic integrals 
have to be considered in the Stratonovich sense. Informally this can be justified by the following consideration. It is well-known that the Stratonovich
stochastic integral can be approximated pathwise by Stieltjes integrals w.r.t.\ to approximations of a Brownian motion by random continuous 
functions of bounded variation, e.g.\ by polygonal lines (the so-called, Wong--Zakai approximations). Each of 
these approximations can be treated path-wise as a deterministic first order PDE that has a solution \eqref{e:sol} (or \eqref{e:solGen}), 
and hence the limit
should have the same form. In the case of jump noise, the role of the Stratonovich
stochastic differential equations is played by the Marcus (canonical) stochastic differential equations. These equations 
can be also seen as a limit of continuous Wong--Zakai approximations and enjoy the Newton--Leibniz change of variables formula 
of conventional calculus. Hence it is intuitively clear that the equation  \eqref{e:SPDE} has to be considered as a Marcus 
equation.

\begin{exa}[transport equation] Consider the transport equation
\ba
\label{e:TE}
u(t,x)=u_0(x)&+ \int_0^t  \nabla^T u(r,x) a(x)\,\di r+ \int_0^t \nabla^T u(r,x)A(x) \circ\di W_r\\
&+\int_0^t \nabla^T  u(r,x)\alpha (x)\diamond\di Z_r.
\ea
In this case, the characteristics equation is a $d$-dimensional Marcus SDE
\ban
\phi_{0,t}(x) &=x - \int_0^t a (\phi_{0,r}(x))\, \di r 
-\int_0^t A(\phi_{0,r}(x))\circ \di W_r 
-\int_0^t \alpha(\phi_{0,r}(x))\diamond \di Z_r.
\ean
Then the solution has the form
\ban
u(t,x)=u_0(\phi_{t,0}(x)).
\ean
\end{exa}

\begin{exa}[explicit one-dimensional solution]
In dimension $m=d=1$ if $a(x)=A(x)=\alpha(x)$, the equation \eqref{e:TE} can be solved explicitly with the help of the It\^o formula for Marcus SDEs.
Indeed, let $Z$ be a general (not necessarily pure jump) one-dimensional L\'evy process, and consider the equation
\ba
\label{e:TEe}
u(t,x)=u_0(x)+ \int_0^t  \nabla u(r,x) \alpha (x)\diamond\di Z_r.
\ea
Assume that $\alpha(x)>0$ and denote 
\ban
H(x)=\int_0^x \frac{\di y}{\alpha(y)},\quad x\in\bR.
\ean
Then the characteristics equation
\ban
\phi_{0,t}(x) &=x - \int_0^t \alpha(\phi_{0,r}(x)) \diamond \di Z_r
\ean
has the solution
\ban
\phi_{0,t}(x)=H^{-1}(H(x)-Z_t)
\ean
and the inverse flow $\phi_{t,0}$ can be found by a straightforward calculation as 
\ban
\phi_{t,0}(x)=H^{-1}(H(x)+Z_t).
\ean
Let us show that 
\ba
\label{e:expl}
u(t,x):=u_0(H^{-1}(H(x)+Z_t))
\ea
satisfies \eqref{e:TEe}. Indeed, $u(0,x)=u_0(x)$, and
\ban
\nabla u(t,x)=\nabla u_0( \phi_{t,0}(x)   ) \nabla \phi_{t,0}(x) = \nabla u_0( \phi_{t,0}(x)   ) \frac{\alpha(H(x)+Z_t)}{\alpha(x)}.
\ean
On the other hand, the It\^o formula for Marcus SDEs (see, e.g.\ Section 4 in Kurtz et al.\ \cite{KurtzPP-95}) yields
\ban
u(t,x)=u_0(H^{-1}(H(x)+Z_t))&=u_0(x)+\int_0^t \frac{\partial}{\partial Z} u_0(H^{-1}(H(x)+Z_r))\diamond \di Z_r\\
&=u_0(x)+\int_0^t  \nabla u_0(  \phi_{0,r}(x) ) \alpha(H(x)+Z_r)  \diamond \di Z_r\\
&=u_0(x)+\int_0^t  \nabla u(r,x)\alpha(x)   \diamond \di Z_r.
\ean
\end{exa}

\begin{exa}
In this example we apply formula \eqref{e:expl} to the first order equation
\ba
\label{e:ess}
\partial_t u(t,x)=\nabla u(t,x)\sqrt{x^2+1} \diamond \di Z_t,\quad u(0,x)=u_0(x).
\ea
Note that although $\alpha(x)=\sqrt{x^2+1}$ is not bounded, the formula \eqref{e:sol} still holds true. Indeed, in this case, 
\ban
H(x)&=\operatorname{arcsinh}(x)=\ln(x+\sqrt{x^2+1})\quad \text{and}\quad \\
H^{-1}(x)&=\sinh(x)=\frac{\ex^x-\ex^{-x}}{2},\quad x\in\bR.
\ean
Hence, \eqref{e:ess} has the explicit solution
\ban
u(t,x)=u_0\Big( \sinh( \operatorname{arcsinh}(x) +Z_t   )  \Big),\quad t\geq 0,\ x\in\bR.
\ean
Sample paths of a symmetric $\alpha$-stable L\'evy process $Z$ and the solution $u$
are presented on Fig.\ \ref{f:fig1}.

\begin{figure}
\begin{center}
 \includegraphics[width=0.9\textwidth]{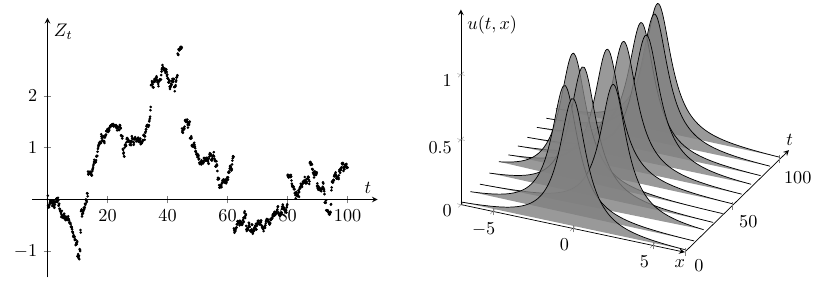}
\end{center}
\caption{
A sample path of a symmetric $\alpha$-stable L\'evy process
$Z$ with $\E\ex^{\i \lambda Z_1}= \ex^{-0.1|\lambda|^\alpha}$ for $\alpha=1.75$ (left); the solution $u(t,x)$
of equation \eqref{e:ess} with the initial condition $u_0(x)=1/(1+x^2)$ sampled at $t=0, 10, 20,\dots, 100$ (right).
\label{f:fig1}}
\end{figure}
\end{exa}

The paper is organized as follows. Since the Marcus integrals generalize the Stratonovich integrals in the jump setting, we will inspect 
Kunita's arguments in the continuous case and adapt them to the case of L\'evy noise. One of the central auxiliary results will be the 
generalized It\^o formula for Marcus SDEs (It\^o--Wentzell formula).

\section{Canonical (Marcus) SDEs\label{s:marcus}}

For the proof of the main theorem we have to recall several facts about canonical SDEs in the semimartingale setting.
Canonical SDEs were introduced by Marcus in \cite{Marcus-78}. They were studied by Marcus \cite{Marcus-81}, Fujiwara \cite{Fujiwara91}, Kurtz et al.\ \cite{KurtzPP-95} in the 
semimartingale setting and 
by Fujiwara and Kunita \cite{FujiwaraK-99a,FujiwaraK-99b} and Kunita \cite{Kunita-04} in the setting of semimartingales with a spatial parameter.

Let $(\Omega,\rF,\bF,\P)$ be a filtered probability space which satisfies the usual hypotheses and let $W$ and $N$ be an 
independent $m$-dimensional
Brownian motion, and a Poisson random measure on $\bR^m\backslash\{0\}$
with the compensator $\nu(\di z)\,\di t$ as defined in the previous section.

For $d\geq 1$, let 
\ban
f(x,r,\omega)&\colon \bR^d\times\bR_+\times \Omega\to \bR^d ,\\
F(x,r,\omega)&\colon \bR^d\times\bR_+\times \Omega\to \bR^{d\times m} ,\\
\phi(x,r,z,\omega)&\colon \bR^d\times\bR_+\times \bR^m\times\Omega\to \bR^{d} 
\ean
be predictable processes with parameters $x$ and $(x,z)$ respectively. In what follows, we will often 
omit the dependence on $\omega\in\Omega$.

We make the following assumptions. There are constants $\delta\in (0,1)$, $L>0$, $K>0$ such that

\noindent
\textbf{H}$_f$: 
\ba
&f(\cdot, r)\in C^{2+\delta}(\bR^d,\bR^d),\\ 
&\sup_x\frac{| f(x, r)|}{1+|x|}\leq K,\\ 
&\| \partial^\alpha f^i(\cdot,r) \|\leq K,\quad 1\leq i\leq d,\quad |\alpha|=1,2,\\
&\|\partial^\alpha  f^i(x, r)- \partial^\alpha   f^i(y, r)\|\leq  L\|x-y\|^\delta,\quad 1\leq i\leq d,\quad |\alpha|=2.  
\ea
\textbf{H}$_F$:
\ba
&F(\cdot, r)\in C^{3+\delta}(\bR^d,\bR^{d\times m}),\\ 
&\sup_x\frac{| F^i_j(x, r)|}{1+|x|}\leq K,\quad 1\leq i\leq d,\ 1\leq j\leq m,\\
&\| \partial^\alpha F^i_j(\cdot,r) \|\leq K,\quad 1\leq i\leq d,\ 1\leq j\leq m,\quad |\alpha|=1,2,\\
&\|\partial^\alpha F^i_j(\cdot,r) F^k_l(\cdot, r)\|\leq K, \quad 1\leq i,k\leq d,\ 1\leq j,l\leq m ,\quad |\alpha|=2,3, \\  
&\|\partial^\alpha  F^i_j(x, r)-\partial^\alpha F^i_j(y, r)\|\leq L\|x-y\|^\delta, \quad 1\leq i\leq d,\ 1\leq j\leq m,\quad |\alpha|=3.
\ea
\textbf{H}$_\phi$: there are non-negative functions $K_0(z)$, $K_1(z)$ and $L_1(z)$ and $L_2(z)$, such that
\ba
\int_{\|z\|\leq 1}\Big(K_0(z)^2+K_1(z)^2+ L_1(z)+L_2(z)^2   \Big)\,\nu(\di z)<\infty
\ea
and
\ban
&\phi(\cdot,r,z)\in C^{2+\delta}(\bR^d,\bR^{d\times m}),\\
&\sup_x\frac{|\phi^i_j(\cdot, r,z)|}{1+|x|}\leq K_0(z),\quad 1\leq i\leq d,\ 1\leq j\leq m ,\\
&\|\partial^\alpha \phi^i_j(\cdot,r,z)\|\leq  K_1(z), \quad 1\leq i,j\leq d,\quad 1\leq |\alpha|\leq 2,  \\
&|\partial^\alpha  \phi^i_j(x,r,z) \phi^k_l(x,r,z)-\partial^\alpha  \phi^i_j(y,r,z)\phi^k_l(y,r,z)|\leq  L_1(z)\|x-y\|,\\
&\hspace{.4\textwidth}  1\leq i,k\leq d,\ 1\leq j,l\leq m ,\quad |\alpha|=1,\\
&|\partial^\alpha  \phi^i_j(x,r,z)-\partial^\alpha  \phi^i_j(y,r,z)|\leq  L_2(z)\|x-y\|^\delta,\\
& \hspace{.4\textwidth} 1\leq i\leq d,\ 1\leq j\leq m,\quad |\alpha|=2.
\ean
Consider a semimartingale $\Phi$ with parameter $x$ given by 
\ba
\label{e:Phi}
\Phi(x,t)&=\int_0^t  f(x,r)\,\di r 
+ \int_0^t F(x,r)\,\di W_r\\
&+ \int_0^t\int_{\|z\|\leq 1}   \phi(x, r, z) \, \tilde N(\di z,\di r)
+ \int_0^t\int_{\|z\|> 1} \phi(x, r, z) \,  N(\di z,\di r) .
\ea
We want to give sense to the following Marcus SDE with the generator $\Phi(\cdot,\cdot)$, which we formally write as
\ba
\label{e:SDE}
X_{t}(x)&=x+\int_0^t \Phi( X_{r}(x),  \diamond\,\di r ).
\ea
This writing has the following meaning:

For each $r\geq 0$, $x\in\bR^d$, $z\in\bR^m$ and $\omega\in\Omega$ consider an ODE (the Marcus ODE) for the function $h=h(u)=h(u;x,r,z)$
\ba
\label{e:h1}
\begin{cases}
&\displaystyle\frac{\di }{\di u}h(u)=\phi(h(u),r,z), \quad u\in[0,1],\\
&\displaystyle h(0)=x.
\end{cases}
\ea
Under Assumption \textbf{H}$_\phi$ there is a global solution $h$ for $u\in \bR$ which we denote by
\ba
\label{e:h2}
\eex^{u\phi(\cdot;r,z)}(x):=h(u;x,r,z),\quad u\in\bR.
\ea
and in particular we define the exponential mapping
\ba
\label{e:h3}
\eex^{\phi(\cdot;r,z)}(x):=h(1; x,r, z).
\ea
For almost all $r$, $\omega$ and $z$ the mapping $x\mapsto \eex^{\phi(\cdot;r,z)}(x)$ is a $C^2$-diffeomorphism.

Consider the following functions which are well-defined due to Assumptions \textbf{H}$_f$, \textbf{H}$_F$ and \textbf{H}$_\phi$:
\ban
\hat f(x,r)&=f(x,r)+ \frac12 \sum_{j=1}^d DF_j(x,r)F_j(x,r) \\
&\qquad + \int_{\|z\|\leq 1} \Big(\eex^{\phi(\cdot,r,z)}(x) -x- \phi(x,r, z) \Big)  \, \nu(\di z),\\
\hat F(x,r)&= F(x,r),\\
\hat \phi(x,r,z)&=\eex^{\phi(\cdot, r, z)}(x) -x. 
\ean
Consider an It\^o semimartingale with parameter
\ban
\hat\Phi(x,t)&=\int_0^t \hat f(x,r)\,\di r 
+ \int_0^t \hat F(x,r)\,\di W_r\\
&+ \int_0^t\int_{\|z\|\leq 1} \hat \phi(x, r, z) \, \tilde N(\di z,\di r)
+ \int_0^t\int_{\|z\|> 1}  \hat \phi(x, r, z) \,  N(\di z,\di r) .
\ean
For an adapted c\`adl\`ag process $\eta=(\eta_t)_{t\in[0,T]}$ we set
\ban
\int_0^t \hat \Phi( \eta_{r-}, \di r  )&=
\int_0^t  \hat f(\eta_{r-},r)\,\di r 
+\int_0^t  \hat F(\eta_{r-},r)\, \di W_r \\
&+ \int_0^t\int_{\|z\|\leq 1} \hat \phi(\eta_{r-}, r, z) \, \tilde N(\di z,\di r)
+ \int_0^t\int_{\|z\|> 1}  \hat \phi(\eta_{r-}, r, z) \,  N(\di z,\di r)
\ean
and consider the It\^o SDE
\ba
\label{e:SDEIto}
X_{t}(x)&=x+\int_0^t \hat \Phi( X_{r-}(x),  \,\di r ).
\ea
Then for any $x$, there is a unique strong solution $X$ of \eqref{e:SDEIto}, see Kunita \cite[Theorem 3.1, Section 3.1]{Kunita-04}.
There is a modification of $X$ such that for each $x\in\bR^d$ the mapping $t\mapsto X_t(x)$ is c\`adl\`ag, and for each $t\in[0,T]$ the 
mappings $x\mapsto X_t(x)$ are $C^2$-diffemorphisms, see Kunita \cite[Section 3.5]{Kunita-04} and \cite[Section 4]{kunita1996stochastic}.

The functions $f$, $F$ and $\phi$ given, we will understand the canonical (Marcus) SDE \eqref{e:SDE} as the It\^o SDE  \eqref{e:SDEIto}.

\section{Generalized It\^o formula for canonical SDEs}

For the proof of formula \eqref{e:sol} we will need the so-called generalized It\^o formula for solutions of canonical SDEs. 
Let us first recall the conventional formula for the canonical Marcus SDEs which formally reminds of a conventional Newton--Leibniz formula
known in calculus. 

\begin{thm}[It\^o's formula for solutions of canonical SDEs] Let $X$ be the solution of the SDE \eqref{e:SDE} and 
let $\Theta\in C^2(\bR^d,\bR)$. Then
\ba
\label{e:itoM}
\Theta(X_t)&=\Theta(x)+\int_0^t \nabla^T\Theta\,\Phi(X_r,\diamond\, \di r),
\ea
where the canonical integral in the r.h.s.\ of \eqref{e:itoM} equals
\ban
\int_0^t &\nabla^T\Theta(X_{r-}) f(X_{r-},r)\,\di r
+ \int_0^t \nabla^T\Theta (X_{r-}) F(X_{r-},r)\circ \di W_r\\
&+\int_0^t\int_{\|z\|\leq 1} \Big(\Theta(\eex^{\phi(\cdot,r,z)}(X_{r-}) )-  \Theta(X_{r-})\Big) \tilde N(\di z,\di r)\\
&+\int_0^t\int_{\|z\|\leq 1} \Big(\Theta(\eex^{\phi(\cdot,r,z)}(X_{r-}) )-  \Theta(X_{r-}) -\phi(X_{r-},r,z)\Big) \nu(\di z)\,\di r\\
&+\int_0^t\int_{\|z\|> 1} \Big(\Theta(\eex^{\phi(\cdot,r,z)}(X_{r-}) )-  \Theta(X_{r-})\Big) N(\di z,\di r).
\ean
\end{thm}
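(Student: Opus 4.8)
The plan is to reduce the statement to the classical It\^o formula for semimartingales with jumps. Recall that, by the convention of Section~\ref{s:marcus}, the process $X$ in the statement is the unique strong solution of the It\^o SDE \eqref{e:SDEIto}, whose coefficients are $\hat f$, $\hat F=F$ and $\hat\phi(x,r,z)=\eex^{\phi(\cdot,r,z)}(x)-x$, and which admits a modification that is a c\`adl\`ag $C^{2}$-flow. Since $\Theta\in C^{2}(\bR^{d},\bR)$, applying the classical It\^o formula to $r\mapsto\Theta(X_{r})$ yields: a drift term $\int_{0}^{t}\nabla^{T}\Theta(X_{r-})\hat f(X_{r-},r)\,\di r$; a Brownian martingale $\int_{0}^{t}\nabla^{T}\Theta(X_{r-})F(X_{r-},r)\,\di W_{r}$; a second-order term $\frac12\int_{0}^{t}\sum_{i,k}\partial_{i}\partial_{k}\Theta(X_{r-})\,(FF^{T})_{ik}(X_{r-},r)\,\di r$; and three jump contributions which, since $X_{r-}+\hat\phi(X_{r-},r,z)=\eex^{\phi(\cdot,r,z)}(X_{r-})$, all feature $\Theta(\eex^{\phi(\cdot,r,z)}(X_{r-}))-\Theta(X_{r-})$, integrated against $\tilde N(\di z,\di r)$ over $\|z\|\leq1$, against $\nu(\di z)\,\di r$ over $\|z\|\leq1$ after subtracting $\nabla^{T}\Theta(X_{r-})(\eex^{\phi(\cdot,r,z)}(X_{r-})-X_{r-})$, and against $N(\di z,\di r)$ over $\|z\|>1$. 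It then remains to rearrange these terms using the explicit form of $\hat f$ recalled in Section~\ref{s:marcus}.

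Substituting that expression for $\hat f$, the term $\int_{0}^{t}\nabla^{T}\Theta(X_{r-})f(X_{r-},r)\,\di r$ is already the first term of \eqref{e:itoM}. The $\frac12\sum_{j}DF_{j}(X_{r-},r)F_{j}(X_{r-},r)$-contribution of $\hat f$, together with the second-order term above, is exactly the It\^o--Stratonovich correction of the integrand $r\mapsto\nabla^{T}\Theta(X_{r})F(X_{r},r)$: the standard computation of the continuous covariation with $W$, using the It\^o formula for $\Theta$ and the SDE satisfied by $X$, gives
\be
\frac12\sum_{j}\di\bigl\langle\bigl(\nabla^{T}\Theta(X_{\cdot})F(X_{\cdot},\cdot)\bigr)_{j},W^{j}\bigr\rangle_{r}=\frac12\Bigl(\sum_{i,k}\partial_{i}\partial_{k}\Theta\,(FF^{T})_{ik}+\nabla^{T}\Theta\,\sum_{j}DF_{j}F_{j}\Bigr)(X_{r},r)\,\di r,
\ee
so that the $\di W_{r}$-integral together with these two $\di r$-terms collapses to the Stratonovich term $\int_{0}^{t}\nabla^{T}\Theta(X_{r-})F(X_{r-},r)\circ\di W_{r}$ of \eqref{e:itoM}.

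For the jump part, the compensated small-jump integral against $\tilde N$ and the large-jump integral against $N$ already coincide termwise with the corresponding terms of \eqref{e:itoM}. For the compensator term one adds the remaining $\nu(\di z)$-contribution of $\hat f$, namely $\int_{0}^{t}\nabla^{T}\Theta(X_{r-})\int_{\|z\|\leq1}\bigl(\eex^{\phi(\cdot,r,z)}(X_{r-})-X_{r-}-\phi(X_{r-},r,z)\bigr)\,\nu(\di z)\,\di r$, to the It\^o compensator term; the $\nabla^{T}\Theta$-contributions telescope, since $-\nabla^{T}\Theta(X_{r-})(\eex^{\phi(\cdot,r,z)}(X_{r-})-X_{r-})+\nabla^{T}\Theta(X_{r-})(\eex^{\phi(\cdot,r,z)}(X_{r-})-X_{r-}-\phi(X_{r-},r,z))=-\nabla^{T}\Theta(X_{r-})\phi(X_{r-},r,z)$, and one obtains precisely the compensator term of \eqref{e:itoM}. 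Collecting the four groups gives \eqref{e:itoM}.

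I expect the main obstacle to lie not in the identity itself, which once the setup of Section~\ref{s:marcus} is in place is a direct rearrangement, but in two technical points. First, the It\^o--Stratonovich identity above presupposes the semimartingale decomposition of $r\mapsto\nabla^{T}\Theta(X_{r})F(X_{r},r)$ with the correct continuous martingale part; this is where the regularity of the coefficients and the $C^{2}$-flow property of $X$ enter, and it is the place where the role of the Marcus correction built into $\hat f$ becomes transparent. Second, since $\Theta$ is not assumed bounded, one must verify that all the $\nu(\di z)$-integrals are a.s.\ finite; I would do this by localizing along $\tau_{n}=\inf\{r\colon|X_{r}|\geq n\}$, using the Gronwall bounds for the Marcus ODE \eqref{e:h1} (which yield $|\eex^{\phi(\cdot,r,z)}(x)-x|\leq c\,K_{0}(z)(1+|x|)$ and $|\eex^{\phi(\cdot,r,z)}(x)-x-\phi(x,r,z)|\leq c\,K_{0}(z)K_{1}(z)(1+|x|)$), a second-order Taylor estimate for $\Theta$ on the range of $X$, and the integrability built into \textbf{H}$_{\phi}$, and then letting $n\to\infty$.
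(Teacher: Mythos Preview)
The paper does not actually prove this theorem; it is stated as background (the standard It\^o formula for Marcus/canonical SDEs, due to Kurtz--Pardoux--Protter \cite{KurtzPP-95} and treated in \cite{Applebaum-09,Kunita-04}) and immediately followed by a Remark, with the proof effort reserved for the \emph{generalized} It\^o formula (Theorem~\ref{t:Ito}). Your proposal is therefore not competing with a proof in the paper, but it is the correct and standard one: pass to the It\^o form \eqref{e:SDEIto} with coefficients $(\hat f,F,\hat\phi)$, apply the classical It\^o formula for jump semimartingales, and regroup. Your Stratonovich bookkeeping is right (the $\frac12\sum_j DF_jF_j$ piece of $\hat f$ together with the Hessian term is exactly the It\^o--Stratonovich correction for the integrand $\nabla^T\Theta(X)F(X,\cdot)$), and so is the telescoping of the $\nu$-compensator.

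One small remark: your computation yields the compensator integrand
\[
\Theta(\eex^{\phi(\cdot,r,z)}(X_{r-}))-\Theta(X_{r-})-\nabla^{T}\Theta(X_{r-})\,\phi(X_{r-},r,z),
\]
whereas the displayed formula in the statement omits the factor $\nabla^{T}\Theta(X_{r-})$ in front of $\phi$. That is a typographical slip in the paper (the expression as printed does not even type-check, since $\Theta$ is scalar and $\phi$ is $\bR^{d}$-valued); your version is the correct one, and your claim that you ``obtain precisely the compensator term'' should be read with this correction in mind. Your localization remark for the integrability of the $\nu$-terms is also the right way to handle an unbounded $\Theta$.
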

\begin{rem}
Note that the process $\Theta(X)$ has the jumps
\ban
\Theta(X_{r})-\Theta(X_{r-})&=
\Theta(\eex^{\phi(\cdot,r,z)}(X_{r-}) )-  \Theta(X_{r-})\\
&=\int_0^1 \nabla^T\Theta( \eex^{u \phi(\cdot,r,z)}(X_{r-}) )\phi(\eex^{u \phi(\cdot,r,z)}(X_{r-}),r,z )\,\di u,
\ean
which justifies the formal writing \eqref{e:itoM}.
\end{rem}

Now we prove the generalized It\^o formula for a pair of canonical SDEs driven by the same Brownian motion and the 
Poisson random measure.

\begin{thm}[generalized It\^o formula for canonical SDEs] 
\label{t:Ito}
Consider solutions of canonical SDEs with generators $\Phi$ and $\Psi$
such that the functions $f,F,\phi$ and $g,G,\psi$ satisfy assumptions of the Section \ref{s:marcus} respectively,
\begin{align}
\label{e:X}
X_{t}(x)&=x+\int_0^t \Phi(X_r(x),\diamond\, \di r),\\
\label{e:Y}
Y_{t}&=y+\int_0^t \Psi(Y_r,\diamond\, \di r).
\end{align}
Then the following formula holds true
\ba
\label{e:ito-gen}
X_{t}(Y_t)
&=y+\int_0^t \Phi(X_{r}(Y_r),\diamond\, \di r)+\int_0^t D X\Psi(Y_r,\diamond\, \di r),
\ea
where the latter integrals are understood as
\ba
\label{e:Phidr}
&\int_0^t \Phi(X_{r}(Y_r),\diamond\, \di r)
= \int_0^t f(X_{r-}(Y_{r-}),r)\,\di r + \int_0^t F(X_{r-}(Y_{r-}),r)\, \di W_r \\
&+\frac12  \sum_{j=1}^m\int_0^t D F_j(X_{r-}(Y_{r-}),r)F_j(X_{r-}(Y_{r-}),r)\, \di r \\
&+ \int_0^t\int_{\|z\|\leq 1} \Big(  \ex^{\phi(\cdot, r,z)}(X_{r-}(Y_{r-}))-X_{r-}(Y_{r-})\Big)\tilde N(\di z,\di r) \\
&+\int_0^t\int_{\|z\|\leq 1} \Big(  \ex^{\phi(\cdot, r,z)}(X_{r-}(Y_{r-}))-X_{r-}(Y_{r-})- \phi(X_{r-}(Y_{r-}),r,z)\Big)\,\nu(\di z)\,\di r\\
&+ \int_0^t\int_{\|z\|> 1} \Big(  \ex^{\phi(\cdot, r,z)}(X_{r-}(Y_{r-}))-X_{r-}(Y_{r-})\Big)  N(\di z,\di r) \\
\ea
and 
\ba
\label{e:DPhidr}
\int_0^t & D X\Psi(Y_r,\diamond\, \di r)
= \int_0^t DX_{r-}(Y_{r-})  g(Y_{r-},r)\,\di r \\
&+\sum_{j=1}^m \frac12 \int_0^t  DX_{r-}(Y_{r-})D G_j(Y_{r-},r)  G_j(Y_{r-},r)\,\di r   \\
&+\frac12 \sum_{j,k=1}^m
\int_0^t (D(DX_{r-})_j)_k(Y_{r-}) G_j(Y_{r-},r)G_k(Y_{r-},r)\,\di r\\
&+\frac12  \sum_{j=1}^m \int_0^t  DX_{r-}(Y_{r-}) D F_j(X_{r-}(Y_{r-}),r) G_j(Y_{r-},r)\,\di r\\
&+ \int_0^t D X_{r-}(Y_{r-}) G(Y_{r-},r)\,\di W_r\\
&+\int_0^t\int_{\|z\|\leq 1}       
\Big[\ex^{\phi (\cdot,r, z)}\Big(X_{r-}\Big(\ex^{\psi(\cdot,r, z)}(Y_{r-})\Big)  
-  \ex^{\phi (\cdot,r, z)}(X_{r-}(Y_{r-}))   \Big]
\,  \tilde N(\di z,\di r)\\
&+\int_0^t\int_{\|z\|\leq 1}      
\Big[ 
\eex^{\phi (\cdot,r, z)}\Big(X_{r-}\Big(\eex^{\psi(\cdot,r, z)}(Y_{r-})\Big)\Big)-\eex^{\phi (\cdot,r, z)}(X_{r-}(Y_{r-}))  \\
&\hspace{0.45\textwidth}-  D X_{r-}(Y_{r-} )  \psi(Y_{r-},r, z) \Big] \, \nu(\di z)\,\di r \\
&+\int_0^t\int_{\|z\|>1}       
\Big[\ex^{\phi (\cdot,r, z)}\Big(X_{r-}\Big(\ex^{\psi(\cdot,r, z)}(Y_{r-})\Big)  
-  \ex^{\phi (\cdot,r, z)}(X_{r-}(Y_{r-}))   \Big]\, N(\di z,\di r).
\ea
\end{thm}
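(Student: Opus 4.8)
The plan is to deduce the statement from the conventional generalized It\^o formula for It\^o jump--diffusions by passing through the It\^o representation of the two Marcus SDEs, and then to recast the resulting expression in the canonical form with the help of the Marcus ODE and the exponential map. First, recall from Section \ref{s:marcus} that, by definition, the field $x\mapsto X_t(x)$ is the strong solution of the It\^o SDE \eqref{e:SDEIto} built from $f,F,\phi$ with the modified coefficients $\hat f$, $\hat F=F$, $\hat\phi(\cdot,r,z)=\eex^{\phi(\cdot,r,z)}(\cdot)-\mathrm{id}$, and likewise $Y_t$ solves the It\^o SDE built from $g,G,\psi$ with $\hat g$, $\hat G=G$, $\hat\psi(\cdot,r,z)=\eex^{\psi(\cdot,r,z)}(\cdot)-\mathrm{id}$; under the assumptions of Section \ref{s:marcus}, imposed here on both triples of coefficients, these generate $C^{2}$--flows of diffeomorphisms. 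To $X_t(Y_t)$ I would apply the classical It\^o--Wentzell formula for the composition of an It\^o semimartingale field with an It\^o semimartingale in the jump setting, adapting the continuous arguments of Kunita \cite{Kunita-04}, \cite{kunita1996stochastic} and Fujiwara and Kunita \cite{FujiwaraK-99a,FujiwaraK-99b}. It represents $X_t(Y_t)$ as the sum of: (a) the differential of the field frozen at $Y_{r-}$ and integrated along $Y$, namely $\hat f(X_{r-}(Y_{r-}),r)\,\di r+F(X_{r-}(Y_{r-}),r)\,\di W_r$ together with the compensated and the large--jump integrals of $\hat\phi(X_{r-}(Y_{r-}),r,z)$; (b) the first--order spatial term $DX_{r-}(Y_{r-})$ integrated against the It\^o differential of $Y$; (c) the second--order spatial term $\tfrac12\,\mathrm{tr}\big(D^2X_{r-}(Y_{r-})\,\di\langle Y^{c}\rangle_r\big)$; (d) the mixed bracket between the continuous martingale part of the gradient field $DX_r(\cdot)$ evaluated at $Y_{r-}$ and the continuous martingale part of $Y$; and (e) the jump compensation arising from the simultaneous jumps of the field $X_r(\cdot)$ and of $Y_r$. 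For a finite L\'evy measure this formula is most transparently obtained by interlacing --- alternating the continuous Stratonovich It\^o--Wentzell formula on the inter--jump intervals with the ordinary chain rule across each jump, at which $X_r(\cdot)$ changes to $\eex^{\phi(\cdot,r,z)}\circ X_{r-}(\cdot)$ and $Y_r$ to $\eex^{\psi(\cdot,r,z)}(Y_{r-})$ through the Marcus ODE \eqref{e:h1} --- and the general case follows by an $L^{2}$ limit as the small--jump cutoff vanishes, the errors being controlled by the integrability hypothesis $\int_{\|z\|\leq1}(K_0^2+K_1^2+L_1+L_2^2)\,\nu(\di z)<\infty$ in \textbf{H}$_\phi$.

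The second and main step is the algebraic reorganisation into \eqref{e:Phidr} and \eqref{e:DPhidr}. Inserting $\hat f=f+\tfrac12\sum_j DF_jF_j+\int_{\|z\|\leq1}\big(\eex^{\phi}(\cdot)-\mathrm{id}-\phi(\cdot,r,z)\big)\,\nu(\di z)$ and $\hat\phi=\eex^{\phi}(\cdot)-\mathrm{id}$ into (a) reproduces exactly the right--hand side of \eqref{e:Phidr}: the transport drift $f$, the Stratonovich correction $\tfrac12\sum_j DF_jF_j$, the stochastic integral $\int F\,\di W$, the compensated small--jump integral, its $\nu$--compensator, and the large--jump integral, all evaluated at $X_{r-}(Y_{r-})$. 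The remaining terms (b)--(e) are then assembled into \eqref{e:DPhidr}: substituting the analogous decomposition $\hat g=g+\tfrac12\sum_j DG_jG_j+\int_{\|z\|\leq1}\big(\eex^{\psi}(\cdot)-\mathrm{id}-\psi(\cdot,r,z)\big)\,\nu(\di z)$ into (b) yields the drift $DX_{r-}(Y_{r-})g(Y_{r-},r)$, the term $\tfrac12\sum_j DX_{r-}(Y_{r-})DG_j(Y_{r-},r)G_j(Y_{r-},r)$, the stochastic integral $\int DX_{r-}(Y_{r-})G(Y_{r-},r)\,\di W_r$, and the $\nu$--contribution $DX_{r-}(Y_{r-})\big(\eex^{\psi}(Y_{r-})-Y_{r-}-\psi(Y_{r-},r,z)\big)$; term (c) gives the second--order spatial term of \eqref{e:DPhidr}; term (d) gives the mixed bracket term of \eqref{e:DPhidr}; and term (e), rewritten by means of the flow identity
\ben
X_r(Y_r)-X_{r-}(Y_{r-})=\eex^{\phi(\cdot,r,z)}\Big(X_{r-}\big(\eex^{\psi(\cdot,r,z)}(Y_{r-})\big)\Big)-X_{r-}(Y_{r-}),
\een
splits as $\big[\eex^{\phi(\cdot,r,z)}(X_{r-}(Y_{r-}))-X_{r-}(Y_{r-})\big]$, which is absorbed by \eqref{e:Phidr}, plus $\big[\eex^{\phi(\cdot,r,z)}(X_{r-}(\eex^{\psi(\cdot,r,z)}(Y_{r-})))-\eex^{\phi(\cdot,r,z)}(X_{r-}(Y_{r-}))\big]$; this last quantity, combined with the $\nu$--contribution from (b) and with the subtracted linear term $DX_{r-}(Y_{r-})\psi(Y_{r-},r,z)$, reproduces precisely the compensated small--jump integral, the $\nu$--integral, and the large--jump integral of \eqref{e:DPhidr}. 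Convergence of each compensated small--jump integral and each $\nu$--integral is ensured by first--order Taylor expansion of $z\mapsto\eex^{\phi(\cdot,r,z)}$ and $z\mapsto\eex^{\psi(\cdot,r,z)}$ together with \textbf{H}$_\phi$ and its analogue for $\psi$.

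I expect the technical heart of the argument to be step (d): one has to identify the continuous martingale part of the gradient flow $DX_r(\cdot)$, which solves the linearised system $\di\,DX_r(x)=D\hat f(X_{r-}(x),r)DX_{r-}(x)\,\di r+\sum_j DF_j(X_{r-}(x),r)DX_{r-}(x)\,\di W^j_r+(\text{jump terms})$, evaluate it at $Y_{r-}$, and compute its quadratic covariation with $Y$; this is where a careful accounting of all Stratonovich and Marcus corrections is needed and where the one extra degree of smoothness demanded of $F$ in \textbf{H}$_F$ (compared with $f$ in \textbf{H}$_f$ and $\phi$ in \textbf{H}$_\phi$) is actually used. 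A second, more routine, obstacle is to justify the interchange of the orders of the It\^o and $\nu$--integrals during the regrouping and the absolute convergence of all series of jumps; both rest on the linear--growth and boundedness bounds of \textbf{H}$_f$, \textbf{H}$_F$, \textbf{H}$_\phi$ together with the above integrability condition on the L\'evy measure.
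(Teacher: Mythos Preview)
Your proposal is sound but takes a genuinely different route from the paper. You reduce the statement to an already--known It\^o--Wentzell formula for jump semimartingale fields (quoting Kunita \cite{Kunita-04,kunita1996stochastic} and Fujiwara--Kunita \cite{FujiwaraK-99a,FujiwaraK-99b}), converting Marcus$\to$It\^o, applying that formula, and then algebraically repackaging the output into the canonical form \eqref{e:Phidr}--\eqref{e:DPhidr}; for the jump part you propose interlacing at finite $\nu$ and an $L^2$ limit. The paper, by contrast, proves the formula from first principles by the mollifier method of Carmona and Nualart \cite{carmona1990nonlinear}: one localises, applies the ordinary It\^o formula to $h_n(Y_t-x)$, then the It\^o product formula to $X_t(x)h_n(Y_t-x)$, integrates over $x$ using Fubini for stochastic integrals and integration by parts (which turns $h_n'$ into $X_{r-}'$ and $h_n''$ into $X_{r-}''$), and passes to the limit $n\to\infty$ term by term. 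Your approach is conceptually cleaner and shorter provided the jump It\^o--Wentzell formula is available off the shelf in the required generality; the paper's approach is self--contained, needing only the classical It\^o formula and the product rule, and handles the delicate point you flag as step~(d) --- the mixed bracket between $DX_r(\cdot)$ and $Y$ --- without ever writing the SDE for $DX_r$, since the second--derivative term emerges directly from two integrations by parts against $h_n''$. One caveat on your side: make sure the cited references really give the composition formula for a $C^2$ semimartingale \emph{field} evaluated along a jump semimartingale (not just a deterministic $C^2$ function), since that is exactly what this theorem asserts; if not, your ``adapting the continuous arguments'' would have to supply that step, and the mollifier argument is precisely one standard way to do so.
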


This Theorem will be proven in Section~\ref{s:gIf}.
We will also need the following formula which is proved analogously, see Section~\ref{s:ito2proof}.

\begin{thm} 
\label{t:Ito2}
Let $\Phi$ be a one-dimensional semimartingale given by \eqref{e:Phi} with a $d$-dimensional parameter $x$ and let $Y$ be a solution of the 
$d$-dimensional  canonical SDE \eqref{e:Y}. Then
\ba
\label{e:Phinabla}
\Phi(Y_t,t)=\int_0^t \Phi(Y_{r-}, \di r)+ \int_0^t  \nabla^T \Phi(Y_r,\diamond\, \di r),
\ea
where
\ba
\label{e:Phinabla1}
 &\int_0^t \Phi(Y_{r-}, \di r)= \int_0^t  f(Y_{r-},r)\,\di r 
+ \int_0^t F(Y_{r-},r)\,\di W_r\\
&+ \int_0^t\int_{\|z\|\leq 1}   \phi(Y_{r-}, r, z) \, \tilde N(\di z,\di r)
+ \int_0^t\int_{\|z\|> 1} \phi(Y_{r-}, r, z) \,  N(\di z,\di r) 
\ea
and
\ba
\label{e:Phinabla2}
&\int_0^t  \nabla^T \Phi(Y_r,\diamond\, \di r)=
\int_0^t  \nabla^T \Phi(Y_{r-},r)g(Y_{r-},r)\,\di r\\ 
&+ \int_0^t  \nabla^T \Phi(Y_{r-},r)G(Y_{r-},r)\circ \di   W_r \\
&+ \int_0^t  \int_{|z|\leq 1}  \int_0^1 \Big(\nabla^T \Phi(\ex^{\psi(\cdot,r, z)}(Y_{r-}),r-)+\nabla^T \phi(\ex^{\psi(\cdot,r, z)}(Y_{r-}),r,z))\Big)\times\\
&\hspace{.5\textwidth}\times\psi(\ex^{\psi(\cdot,r, z)}(Y_{r-}),r,z)\,\di u\, \tilde N(\di z,\di r)\\
&+
\int_0^t  \int_{|z|\leq 1} \Big[  \int_0^1 \Big(\nabla^T \Phi(\ex^{\psi(\cdot,r, z)}(Y_{r-}),r-)+\nabla^T \phi(\ex^{\psi(\cdot,r, z)}(Y_{r-}),r,z))\Big)
\times\\
&\hspace{.17\textwidth}\times \psi(\ex^{\psi(\cdot,r, z)}(Y_{r-}),r,z)\,\di u
- \nabla^T \Phi(Y_{r-},r- )\psi(Y_{r-},r,z)\Big]\, \nu(\di z)\,\di r\\
&+ \int_0^t  \int_{|z|> 1}  \int_0^1 \Big(\nabla^T \Phi(\ex^{\psi(\cdot,r, z)}(Y_{r-}),r-)+\nabla^T \phi(\ex^{\psi(\cdot,r, z)}(Y_{r-}),r,z))\Big)\times\\
&\hspace{.5\textwidth}\times\psi(\ex^{\psi(\cdot,r, z)}(Y_{r-}),r,z)\,\di u\,  N(\di z,\di r).
\ea
\end{thm}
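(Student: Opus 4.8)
The plan is to follow the same route as the proof of Theorem~\ref{t:Ito}: reduce everything to It\^o form, apply the classical generalized It\^o (It\^o--Wentzell) formula, and then reassemble the output into the canonical (Marcus) form \eqref{e:Phinabla}--\eqref{e:Phinabla2}. First I would replace the canonical equation \eqref{e:Y} for $Y$ by its It\^o counterpart \eqref{e:SDEIto}: by the construction of Section~\ref{s:marcus}, $Y$ then has continuous martingale part $\int_0^{\cdot} G(Y_{r-},r)\,\di W_r$, drift $\hat g(Y_{r-},r)=g(Y_{r-},r)+\tfrac12\sum_{j}DG_j(Y_{r-},r)G_j(Y_{r-},r)+\int_{\|z\|\le1}\big(\eex^{\psi(\cdot,r,z)}(Y_{r-})-Y_{r-}-\psi(Y_{r-},r,z)\big)\,\nu(\di z)$, and, at each Poisson point $(r,z)$ of $N$, jump $\eex^{\psi(\cdot,r,z)}(Y_{r-})-Y_{r-}$. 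The field $\Phi(\cdot,t)$ is already an It\^o semimartingale field of the form \eqref{e:Phi}, with field jump $\Phi(\cdot,r)-\Phi(\cdot,r-)=\phi(\cdot,r,z)$ at a Poisson point. Applying the generalized It\^o (It\^o--Wentzell) formula for an It\^o semimartingale field composed with an It\^o semimartingale with jumps (Kunita~\cite{Kunita-04}) --- legitimate here because \textbf{H}$_f$, \textbf{H}$_F$, \textbf{H}$_\phi$ and their analogues for $g,G,\psi$ supply the needed $C^{2+\delta}$/$C^{3+\delta}$ spatial regularity and $\nu$-integrability --- expresses $\Phi(Y_t,t)$ as the sum of: the differential of $\Phi$ in its time variable frozen at $x=Y_{r-}$, i.e.\ $f(Y_{r-},r)\,\di r+F(Y_{r-},r)\,\di W_r+\phi(Y_{r-},r,z)\,\tilde N+\phi(Y_{r-},r,z)\,N$; the transport integral $\nabla^{T}\Phi(Y_{r-},r)\,\di Y_r$; the It\^o second-order drift $\tfrac12\operatorname{tr}\big(G(Y_{r-},r)G(Y_{r-},r)^{T}\nabla^{2}\Phi(Y_{r-},r)\big)\,\di r$; the Wentzell cross drift $\sum_{j}\nabla^{T}F_j(Y_{r-},r)G_j(Y_{r-},r)\,\di r$; and the exact jump contributions $\Phi(Y_r,r)-\Phi(Y_{r-},r-)$, suitably compensated.

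Second, I would reassemble this into \eqref{e:Phinabla}. For the continuous part I collect the drift produced by $\nabla^{T}\Phi(Y_{r-},r)\,\di Y_r$ (which contains $\nabla^{T}\Phi\,g$ and $\tfrac12\sum_{j}\nabla^{T}\Phi\,(DG_j)G_j$), the It\^o second-order term, and the Wentzell cross term, and convert the It\^o Brownian integrals back to Stratonovich ones via $\int H\circ\di W=\int H\,\di W+\tfrac12[H,W]$; a direct computation of the covariations $[F(Y_\cdot,\cdot),W^{j}]$ and $[\nabla^{T}\Phi(Y_\cdot,\cdot)G(Y_\cdot,\cdot),W^{j}]$ shows that the three drift corrections just listed are precisely those absorbed when passing to the canonical Brownian integrals of \eqref{e:Phinabla1}--\eqref{e:Phinabla2}, leaving only the pure drifts $\int_0^{t}f(Y_{r-},r)\,\di r$ and $\int_0^{t}\nabla^{T}\Phi(Y_{r-},r)g(Y_{r-},r)\,\di r$. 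For the jump part, at a Poisson point $(r,z)$ one has $\Phi(Y_r,r)-\Phi(Y_{r-},r-)=\Phi\big(\eex^{\psi(\cdot,r,z)}(Y_{r-}),r-\big)+\phi\big(\eex^{\psi(\cdot,r,z)}(Y_{r-}),r,z\big)-\Phi(Y_{r-},r-)$; I would split off the term $\phi(Y_{r-},r,z)$, which feeds \eqref{e:Phinabla1}, and represent the remainder $\big[\Phi+\phi\big]\big(\eex^{\psi(\cdot,r,z)}(Y_{r-}),r-\big)-\big[\Phi+\phi\big](Y_{r-},r-)$ by the fundamental theorem of calculus along the Marcus interpolation $u\mapsto\eex^{u\psi(\cdot,r,z)}(Y_{r-})$, using $\tfrac{\di}{\di u}\eex^{u\psi(\cdot,r,z)}(Y_{r-})=\psi\big(\eex^{u\psi(\cdot,r,z)}(Y_{r-}),r,z\big)$ from \eqref{e:h1}. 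This produces exactly the integrand $\int_0^{1}\big(\nabla^{T}\Phi(\eex^{u\psi(\cdot,r,z)}(Y_{r-}),r-)+\nabla^{T}\phi(\eex^{u\psi(\cdot,r,z)}(Y_{r-}),r,z)\big)\,\psi\big(\eex^{u\psi(\cdot,r,z)}(Y_{r-}),r,z\big)\,\di u$ of \eqref{e:Phinabla2}; after compensation, the subtracted linear term $\nabla^{T}\Phi(Y_{r-},r-)\psi(Y_{r-},r,z)$ there is exactly what survives once the $\nu$-integral inside $\hat g$ and the jump of the transport integral $\int\nabla^{T}\Phi\,\di Y$ have been accounted for, and the $\|z\|>1$ piece is the uncompensated $N$-integral.

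The main obstacle is precisely this reassembly: the bookkeeping of which It\^o drift corrections are consumed when converting $\di W$-integrals into $\circ\,\di W$-integrals (in particular, attributing the Wentzell cross term and the It\^o correction of $Y$'s drift to the correct places), together with the analogous split of the jump terms so that the $\tilde N$-integrals remain genuine $L^{2}$-martingales and the $\nu(\di z)\,\di r$-compensators converge. Here the finiteness $\int_{\|z\|\le1}\big(K_0^{2}+K_1^{2}+L_1+L_2^{2}\big)\,\nu(\di z)<\infty$ from \textbf{H}$_\phi$ and its counterpart for $\psi$, together with the fact that $x\mapsto\eex^{\psi(\cdot,r,z)}(x)$ is a $C^{2}$-diffeomorphism, are used in an essential way. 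A secondary and more routine obstacle is justifying the jump It\^o--Wentzell formula in the stated generality without simply quoting Kunita~\cite{Kunita-04}; this is done, exactly as for Theorem~\ref{t:Ito} in Section~\ref{s:gIf}, by localization together with approximation of the spatial argument of $\Phi$ by simple (piecewise constant) functions and a passage to the limit.
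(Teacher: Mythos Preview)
Your argument is correct, but it proceeds along a genuinely different line from the paper's own proof. The paper does not invoke a ready-made It\^o--Wentzell formula and then reassemble; instead it reruns the mollifier machinery of Section~\ref{s:gIf} verbatim. Concretely, after localization it forms the product $\Phi(x,t)\,h_n(Y_t-x)$ with the mollifier $h_n$, expands this by the ordinary It\^o product formula (using \eqref{e:hY} for $h_n(Y_t-x)$), integrates the resulting identity over $x\in\bR$, and passes to the limit $n\to\infty$ term by term exactly as for the $I$- and $J$-terms in Section~\ref{s:gIf}. The output is already the displayed It\^o expansion of $\Phi(Y_t,t)$, and the final sentence ``which can be transformed to \eqref{e:Phinabla}, \eqref{e:Phinabla1}, \eqref{e:Phinabla2}'' covers the algebraic repackaging that you spell out in detail.

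What each approach buys: the paper's mollifier route is self-contained and avoids any external It\^o--Wentzell citation, at the price of repeating the analysis of Section~\ref{s:gIf}. Your route is more modular---the hard analysis is outsourced to Kunita~\cite{Kunita-04}, and the real work is the Stratonovich/Marcus bookkeeping, which you describe accurately (in particular the attribution of the three drift corrections to the single Stratonovich integral $\int\nabla^{T}\Phi\,G\circ\di W$, and the fundamental-theorem-of-calculus representation of the jump increment along $u\mapsto\eex^{u\psi(\cdot,r,z)}(Y_{r-})$). One small point: your fallback ``approximation of the spatial argument of $\Phi$ by simple (piecewise constant) functions'' is not what Section~\ref{s:gIf} actually does---it uses mollifiers, not step functions---so if you intend to mirror the paper's method you should say mollification rather than piecewise-constant approximation.
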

 
\section{Equations for the inverse flows of the canonical SDEs}

We know from Kunita \cite{kunita1996stochastic,Kunita-04} that the 
solution $x\mapsto X_t(x)$, $t\geq 0$, maps $\bR^d$ onto itself diffeomorfically, and there exists a modification such that 
$X_{s,t}:=X_t\circ X^{-1}_s$ defines the stochastic flow of diffeomorphisms. Denote by $DX_t(x)$ its gradient (Jacobi) matrix,
that satisfies the so-called
variational SDE and is a (right) stochastic exponent. Let $(DX_t(x))^{-1}$ be its matrix inverse.

Consider the inverse flow $X_{t,0}:=X_{0,t}^{-1}$, $t\geq 0$. We show that the inverse flow satisfies the following formula.  
 
\begin{thm}
The inverse flow $t\mapsto X_{t,0}$, $t\geq 0$ satisfies the canonical SDE
\ba
\label{e:inveM}
X_{t,0}(x)=x-\int_0^t \Big(D X_{0,r}( X_{r,0}(x))\Big)^{-1} \Phi (x,\diamond\,\di r) 
\ea
which is understood in the following sense:
\ba
\label{e:inve}
X_{t,0}(x)&=x- \int_0^t \Big(D X_{0,r-}( X_{r-,0}(x))\Big)^{-1} f(x,r)\,\di r \\
&-\int_0^t \Big(D X_{0,r-}( X_{r-,0}(x))\Big)^{-1} F(x,r)\circ \di W_r \\
&+ \int_0^t \int_{\|z\|\leq 1} \Big(\eex^{\psi(\cdot,r,z)}(X_{r-,0}(x)) -X_{r-,0}(x) \Big) \, \tilde N(\di z,\di r) \\
&+\int_0^t\int_{\|z\|\leq 1} \Big(  \eex^{\psi(\cdot,r,z)}(X_{r-,0}(x)) 
-  X_{r-,0}(x) \\
&\hspace{.3\textwidth}-  ( D X_{0,r-}(X_{r-,0}(x)))^{-1} \phi(x ,r,z) \Big) \, \nu(\di z)\,\di r \\
&+\int_0^t \int_{\|z\|> 1} \Big(\eex^{\psi(\cdot,r,z)}(X_{r-,0}(x)) -X_{r-,0}(x) \Big) \, N(\di z,\di r).
\ea
where $\eex^{\psi(\cdot,r,z)}$ is the exponential mapping defined with the help of the solution $w=w(u;y)=w(u; y,r,z)$ of the ODE
\ba
\label{e:MF}
\begin{cases}
\displaystyle
\frac{\di}{\di u}w(u;y)&
=-\Big(D\eex^{u\phi(\cdot, r,z)}(X_{r-}(\cdot))\Big)^{-1}\phi(\eex^{u\phi(\cdot, r,z)}(X_{r-}(\cdot)),r,z)\circ w(u;y) ,\\
w(0;y)&=y,\quad u\in[0,1],
\end{cases}
\ea
i.e.\ $\eex^{\psi(\cdot,r,z)}(y):=\eex^{\psi(r,z)}(y):=w(1;y)$.
\end{thm}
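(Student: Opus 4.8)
The plan is to derive the canonical SDE for the inverse flow $X_{t,0}$ directly from the identity $X_{0,t}(X_{t,0}(x)) = x$ by applying the generalized It\^o formula of Theorem~\ref{t:Ito}. First I would set $Y_t := X_{t,0}(x)$ and note that the composition $X_{0,t}(Y_t)$ is constant in $t$, equal to $x$. If $Y$ itself is assumed to satisfy some canonical SDE with an unknown generator $\Psi$ (with coefficients $g, G, \psi$), then plugging $X = X_{0,\cdot}$ and this $Y$ into formula \eqref{e:ito-gen} and demanding that the right-hand side vanish identically gives a system of equations for $g, G, \psi$. Matching the $\di r$, $\di W_r$, $\tilde N(\di z,\di r)$, $N(\di z,\di r)$ terms separately (using that the continuous finite-variation, Brownian, and jump parts of a semimartingale are determined uniquely) will pin down the generator of the inverse flow, and I expect this to produce exactly the expressions in \eqref{e:inve}.

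Concretely, I would carry out the matching in the following order. (1) \emph{Brownian part.} From \eqref{e:Phidr}--\eqref{e:DPhidr}, the $\di W_r$-terms are $F(X_{r-}(Y_{r-}),r) + DX_{r-}(Y_{r-})\,G(Y_{r-},r)$; since $X_{r-}(Y_{r-}) = x$ this forces $G(Y_{r-},r) = -(DX_{0,r-}(X_{r-,0}(x)))^{-1} F(x,r)$, which is the Stratonovich $W$-integrand in \eqref{e:inve}. (2) \emph{Jump part.} The large-jump ($\|z\|>1$) and compensated small-jump terms in \eqref{e:Phidr}--\eqref{e:DPhidr} combine to give a jump of $X_{0,r}(Y)$ equal to
\[
\eex^{\phi(\cdot,r,z)}\!\Big(X_{r-}\big(\eex^{\psi(\cdot,r,z)}(Y_{r-})\big)\Big) - X_{r-}(Y_{r-}),
\]
and for this to vanish we need $\eex^{\psi(\cdot,r,z)}(Y_{r-})$ to be the preimage of $X_{r-}(Y_{r-})$ under $\eex^{\phi(\cdot,r,z)}(X_{r-}(\cdot))$; differentiating the relation $\eex^{u\phi(\cdot,r,z)}(X_{r-}(w(u)))=\text{const}$ in $u$ yields precisely the ODE \eqref{e:MF} defining $\eex^{\psi(\cdot,r,z)}$, and the resulting $\psi$ is read off from $w'(0)$. (3) \emph{Drift part.} Collecting all $\di r$-terms — the genuine drifts $f$, $g$, the two Stratonovich-correction brackets, the cross term $\frac12\sum_j DX_{r-} DF_j\,G_j$, and the $\nu(\di z)$-compensator terms — and setting the sum to zero determines $g$; combined with the relation $\hat f = f + \tfrac12\sum_j DF_j F_j + \int(\cdots)\nu(\di z)$ this should reproduce the drift term $-(DX_{0,r-}(X_{r-,0}(x)))^{-1} f(x,r)$ together with the $\nu$-integral compensator in \eqref{e:inve}. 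Throughout, one uses that the flow property $X_{0,r}\circ X_{r,0} = \mathrm{id}$ gives $DX_{0,r-}(X_{r-,0}(x))\cdot DX_{r-,0}(x) = I$, so the various Jacobian factors simplify.

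The main obstacle will be step (3): bookkeeping the several second-order Stratonovich-correction terms in \eqref{e:DPhidr}, in particular the term involving the Hessian $(D(DX_{r-})_j)_k$ and the mixed term $DX_{r-}DF_j G_j$, and checking that after substituting $G_j = -(DX)^{-1}F_j$ they cancel against the analogous corrections hidden inside $\int_0^t \Phi(X_r(Y_r),\diamond\,\di r)$, leaving only the clean drift of \eqref{e:inve}. A secondary technical point is justifying that $Y_t = X_{t,0}(x)$ is indeed a canonical semimartingale of the assumed form with coefficients satisfying the hypotheses of Section~\ref{s:marcus}, so that Theorem~\ref{t:Ito} applies; this follows from the fact that $X_{t,0}$ is the inverse of a $C^2$-flow of diffeomorphisms generated by coefficients under \textbf{H}$_{\text{coeff}}$, together with the known regularity of inverse flows and their Jacobians (Kunita \cite{kunita1996stochastic,Kunita-04}), but it should be stated carefully. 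Once the generator is identified, uniqueness of the strong solution to \eqref{e:inve} (Section~\ref{s:marcus}) closes the argument.
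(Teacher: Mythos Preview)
Your proposal uses the same core ingredients as the paper --- the generalized It\^o formula (Theorem~\ref{t:Ito}) applied to the composition $X_{0,t}(Y_t)$, followed by the cancellation of drift, Brownian, and jump terms --- and the computations you sketch in steps (1)--(3) are exactly the ones carried out in the paper. The key jump identity you identify, that $\eex^{u\phi(\cdot,r,z)}(X_{r-}(w(u)))$ is constant in $u$, is precisely the paper's equation~\eqref{e:hw}.

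The one substantive difference is the logical order, and here the paper's arrangement is cleaner than yours. You start from $Y_t:=X_{t,0}(x)$, assume it satisfies \emph{some} canonical SDE with unknown $(g,G,\psi)$, and then invoke Theorem~\ref{t:Ito} to pin down those coefficients by matching. But Theorem~\ref{t:Ito} requires its input $Y$ to already be the solution of a canonical SDE whose coefficients satisfy the hypotheses of Section~\ref{s:marcus}; you cannot check those hypotheses before you know what the coefficients are, so the application is formally circular. You flag this as a ``secondary technical point'', but it is in fact the reason the paper runs the argument in reverse: it first \emph{defines} the supplementary SDE for $Y$ with the explicit coefficients $g,G,\psi$ (so the hypotheses can be verified directly, after localization), then applies Theorem~\ref{t:Ito} to show $X_t(Y_t)\equiv x$, and finally concludes $Y_t=X_{0,t}^{-1}(x)$ by uniqueness of the inverse. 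This ``guess-and-verify'' direction avoids any appeal to an a~priori semimartingale decomposition of the inverse flow. Your step~(3) worry about the Stratonovich bookkeeping also largely evaporates in the paper's direction: once $g$ and $G$ are given explicitly, the drift cancellation $f+DX\cdot g=0$ is a one-line substitution, and the Stratonovich corrections are absorbed into the canonical-SDE formalism rather than checked term by term.
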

\begin{proof}
For brevity we assume that $\nu(\|z\|>1)=0$ and denote $X_{0,t}=X_t$, and $DX_{0,t}=DX_t$. Recall that $DX_t(x)$ is a right stochastic exponent,
see Section V.9 of Protter \cite{Protter-04} and Section 4 of Fujiwara and Kunita \cite{FujiwaraK-99a} for more detail. It
is well defined and is invertible.

Define the following drift, diffusion and jump coefficients:
\ban
g(y,r)&=-\Big(DX_{r-}(y)\Big)^{-1}f(X_{r-}(y),r),\\
G(y,r)&=-\Big(DX_{r-}(y)\Big)^{-1}F(X_{r-}(y),r),\\
\psi(y,r,z,u)&=-\Big(D \eex^{u\phi(\cdot,r,z)}(X_{r-}(y))\Big)^{-1}\phi(\eex^{u\phi(\cdot,r,z)}(X_{r-}(y)),r,z).
\ean
In particular,
\ban
\psi(y,r,z,0)&=-\Big(D X_{r-}(y)\Big)^{-1}\phi(X_{r-}(y),r,z).
\ean
The functions $g$, $G$ and $\psi$ are predictable
 and with the help of localization we can assume that
they satisfy Assumptions of Section \ref{s:marcus}.
Then equation \eqref{e:MF} has a unique global solution.

Consider the supplementary SDE
\ban
Y_t&=x+\int_0^t g(Y_{r-},r)\,\di r
+\int_0^t G(Y_{r-},r)\circ \di W_r\\ 
& + \int_0^t \int_{\|z\|\leq 1} \Big(\eex^{\psi(r,z)}(Y_{r-}) -Y_{r-} \Big) \, \tilde N(\di z,\di r) \\
&+\int_0^t\int_{\|z\|\leq 1} \Big(  \eex^{\psi(r,z)}(Y_{r-})- Y_{r-}-\Big(D X_{r-}(Y_{r-})\Big)^{-1}\phi(X_{r-}(Y_{r-}),r,z)\Big)\,\nu(\di z)\,\di r,
\ean
where $\eex^{\psi(r,z)}$ is defined in \eqref{e:MF}. 

We show that for each $T>0$ and for any localized solution we have $X_t(Y_t)\equiv x$ on $[0,T]$. 
Let us again consider the one-dimensional case. 
We apply the generalized It\^o formula and show that all the integral terms vanish. Indeed, for the drift term we get
\ban
f(X_{r-}&(Y_{r-}),r,z)+D X_{r-}{Y_{r-}}g(Y_{r-},r,z)\\
&=f(X_{r-}(Y_{r-}),r,z)-D X_{r-}{Y_{r-}}\Big(DX_{r-}(Y_{r-})\Big)^{-1}f(X_{r-}(Y_{r-}),r)\equiv 0.
\ean
The other Lebesgue and It\^o stochastic integrals w.r.t.\ $W$ vanish analogously. To treat the jump terms we 
consider the function $h(u;x):=\eex^{u\phi(\cdot,r,z)}(X_{r-}(x))$
where the mapping $(u,x)\mapsto \eex^{u\phi}(x)$ has been defined in \eqref{e:h1}, \eqref{e:h2}, \eqref{e:h3}, so that
$h(0;x):=X_{r-}(x)$ and $h(1;x):=X_{r}(x)$.
Then
taking into account \eqref{e:MF} we obtain that
\ba
\label{e:hw}
\frac{\di }{\di u} h(u; w(u;y) )
&= \frac{\partial }{\partial u }h(u;w(u;y) )+ \frac{\partial}{\partial x} h(u;w(u;y)) \frac{\di }{\di u }w(u;y)\\
&=\phi(h(u;w(u;y));r,z   )\\
&- \frac{\partial}{\partial x}  h(u;w(u;y)) \Big( \frac{\partial}{\partial x}  h(u;w(u;y))    \Big)^{-1}\cdot 
\phi(h(u;w(u;y));r,z   )\\
&\equiv 0.
\ea
In other words, we have
\ban
\eex^{\phi(r,z)}(X_{r-}(\eex^{\psi(r,z)}(Y_{r-}   ) ) - X_{r-}(Y_{r-})
&=\Big(h(1; w(1;y)) - h(0; w(0,y))\Big)\Big|_{y=Y_{r-}}\\
&=\int_0^1   \frac{\di }{\di u} h(u; w(u;y) )\,\di u\Big|_{y=Y_{r-}}\\
&\equiv 0.
\ean
Furthermore, putting together the compensated terms in the generalized It\^o formula we get
\ban
&\eex^{\phi(\cdot, r,z)}(X_{r-}(Y_{r-}))-X_{r-}(Y_{r-})- \phi(X_{r-}(Y_{r-}),r,z)\\
&+ \eex^{\phi (\cdot,r, z)}\Big(X_{r-}\Big(\eex^{\psi(\cdot,r, z)}(Y_{r-})\Big)\Big)-\eex^{\phi (\cdot,r, z)}(X_{r-}(Y_{r-}))  \\
&- DX_{r-}(Y_{r-} ) \psi(Y_{r-},r,z) 
\equiv 0.
\ean
Hence $Y_t(y)=X_t^{-1}(y)$ for each localized solution $Y$. Since $X$ exists on $[0,T]$, passing to the limit in the localization sequence
we get that the $Y$ is the inverse flow and satisfies the SDE \eqref{e:inve}.
\end{proof}

\begin{thm}[It\^o's formula for the inverse flow w.r.t.\ the first variable]
\label{t:Theta}
Let $\Theta\in C^2(\bR^d,\bR)$. Then the inverse flow $t\mapsto X_{t,0}$ satisfies the canonical SDE
\ba
\label{e:Theta2}
\Theta(X_{t,0}(x))&=\Theta(x)
-\int_0^t  \nabla^T \Theta (X_{r,0}(x)) D X_{r,0}(x) \Phi (x,\diamond\, \di r) \\
&=\Theta(x)-\int_0^t  \nabla^T \Big(\Theta \circ X_{r,0}(x)\Big) \Phi (x,\diamond\,\di r )  ,
\ea
which is understood as follows:
\ba
\label{e:inveIto}
\Theta(X_{t,0}(x))&=\Theta(x)- \int_0^t \nabla^T \Theta(X_{r-,0}(x)) D X_{r-,0}(x) f(x,r)\,\di r \\
&-\int_0^t \nabla^T\Theta(X_{r-,0}(x)) D X_{r-,0}(x)  F(x,r)\circ \di W_r \\
&+ \int_0^t \int_{\|z\|\leq 1} \Big(\Theta(\eex^{-\phi(\cdot, r,z)}(X_{r-,0}(x)))- \Theta(X_{r-,s}(x)) \Big) \, \tilde N(\di z,\di r) \\
&+\int_0^t\int_{\|z\|\leq 1} 
\Big(  \Theta(\eex^{-\phi(\cdot,r,z)}(X_{r-,0}(x))) - \Theta( X_{r-,0}(x))\\
&\hspace{0.2\textwidth}+\nabla^T \Theta(X_{r-,0}(x)) D X_{r-,0}(x) \phi(x ,r,z) \Big) \, \nu(\di z)\,\di r\\
 &+ \int_0^t \int_{\|z\|> 1} \Big(\Theta(\eex^{-\phi(\cdot, r,z)}(X_{r-,0}(x)))- \Theta(X_{r-,0}(x)) \Big) \,  N(\di z,\di r).
\ea
\end{thm}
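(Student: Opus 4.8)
The plan is to apply the generalized It\^o formula of Theorem~\ref{t:Ito2} with $Y_t := X_{t,0}(x)$, which by the previous theorem is the solution of the canonical SDE \eqref{e:inve} with generator having coefficients $g(y,r)=-(DX_{0,r-}(y))^{-1}f(X_{r-,0}\text{-type argument})$, $G$, and the jump family $\psi$ built from the ODE \eqref{e:MF}. To reach \eqref{e:inveIto} we will not use the full $\nabla$-part from Theorem~\ref{t:Ito2} literally; instead the cleanest route is to apply the ordinary It\^o formula for canonical SDEs (equation \eqref{e:itoM}) to the composition $\Theta(Y_t)$, since $Y$ is itself a genuine canonical SDE. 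Thus $\Theta(X_{t,0}(x)) = \Theta(x) + \int_0^t \nabla^T\Theta\,\Psi(Y_r,\diamond\,\di r)$, where $\Psi$ is the generator of the inverse flow.

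The main work is then to rewrite each term of $\int_0^t \nabla^T\Theta\,\Psi(Y_r,\diamond\,\di r)$ in the form appearing on the right-hand side of \eqref{e:inveIto}. First I would treat the continuous part: the drift and Stratonovich $\di W$ terms of the inverse flow carry the factor $-(DX_{0,r-}(Y_{r-}))^{-1}$ applied to $f$, $F$ evaluated along $X_{r-}(Y_{r-})=x$; contracting with $\nabla^T\Theta(Y_{r-})$ and using the chain-rule identity $\nabla^T(\Theta\circ X_{r,0})(x) = \nabla^T\Theta(X_{r,0}(x))\,DX_{r,0}(x)$ together with $DX_{r,0}(x) = (DX_{0,r}(X_{r,0}(x)))^{-1}$ yields exactly the first two lines of \eqref{e:inveIto}. (Here one must also check that the It\^o-to-Stratonovich correction produced by \eqref{e:itoM} is consistent with writing $\circ\,\di W_r$ on the inverse-flow coefficient; this is the one genuinely technical bookkeeping point.) Second, for the jump terms: the canonical It\^o formula replaces a jump $Y_{r-}\mapsto \eex^{\psi(\cdot,r,z)}(Y_{r-})$ by the increment $\Theta(\eex^{\psi(\cdot,r,z)}(Y_{r-}))-\Theta(Y_{r-})$. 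The key identity, already established in the proof of the inverse-flow theorem via \eqref{e:hw}, is that $\eex^{\psi(\cdot,r,z)}$ is the inverse of $\eex^{\phi(\cdot,r,z)}\circ X_{r-}$ up to the flow structure, so that $X_{r,0}\circ \eex^{\phi(\cdot,r,z)}(X_{r-,0}(\cdot))^{-1}$-type composition identifies $\eex^{\psi(\cdot,r,z)}(X_{r-,0}(x))$ with $\eex^{-\phi(\cdot,r,z)}(X_{r-,0}(x))$ in the notation of the statement; substituting this gives the $\tilde N$ and $N$ integrands of \eqref{e:inveIto}. For the compensator term one additionally rewrites $\nabla^T\Theta(Y_{r-})\,\psi(Y_{r-},r,z,0) = -\nabla^T\Theta(X_{r-,0}(x))\,DX_{r-,0}(x)\,\phi(x,r,z)$ using $\psi(y,r,z,0) = -(DX_{r-}(y))^{-1}\phi(X_{r-}(y),r,z)$ from the previous proof, which produces the $+\nabla^T\Theta\,DX_{r-,0}\,\phi$ term under the $\nu(\di z)\,\di r$ integral.

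The second equality in \eqref{e:Theta2} is then just the chain rule $\nabla^T(\Theta\circ X_{r,0}(x)) = \nabla^T\Theta(X_{r,0}(x))\,DX_{r,0}(x)$ applied formally to the symbolic canonical integral, and requires no separate argument. Finiteness and integrability of all the terms follow from $\Theta\in C^2$, $u_0$-type boundedness is not needed here, and the regularity of the flow $X$ and its inverse stated in Section~\ref{s:marcus} (the $C^2$-diffeomorphism property and the fact that $DX$ is an invertible right stochastic exponent), together with Assumption \textbf{H}$_\phi$ which controls the $\nu$-integrability of the jump increments; a standard localization reduces to the case where $(DX)^{-1}$ and its derivatives are bounded. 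The main obstacle I anticipate is purely notational: carefully matching the sign conventions and the $\eex^{\psi}$ versus $\eex^{-\phi}$ descriptions of the jump map of the inverse flow, and making sure the Stratonovich correction in the $\di W$ term is tracked correctly through the composition — the underlying identities are all already available from \eqref{e:hw} and the inverse-flow theorem, so no new analytic estimate is required.
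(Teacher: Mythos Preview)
Your approach is essentially the same as the paper's: apply the canonical It\^o formula \eqref{e:itoM} to $\Theta(Y_t)$ with $Y_t=X_{t,0}(x)$, use the Jacobian identity $(DX_{0,r}(X_{r,0}(x)))^{-1}=DX_{r,0}(x)$ for the drift and Stratonovich terms, and identify the jump map $\eex^{\psi}$ with $\eex^{-\phi}$ for the Poisson part. The only difference is that the paper makes this last identification explicit by substituting $(Dh)^{-1}=Dw$ (from differentiating the relation \eqref{e:hw}) back into the Marcus ODE \eqref{e:MF}, obtaining the transport equation $\frac{\di}{\di u}w(u;x)=-Dw(u;x)\,\phi(x,r,z)$ whose solution is $\eex^{-u\phi(\cdot,r,z)}(x)$, whereas you leave this step at the level of an abstract composition identity.
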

\begin{proof}
The proof goes along the lines of the proof 
of Theorem 4.4.5 in Kunita \cite{Kunita97}.
For brevity we denote the forward flow by $X_t:=X_{0,t}$ and the inverse flow by $Y_t:=X_{t,0}$, $t\in[0,T]$. We have shown 
that the inverse flow $Y$
satisfies the SDE \eqref{e:inveM}.

First we note that since $X_{t}(Y_t(x))\equiv x$, the gradient matrices  $D X_{t}(x)$ and $D Y_{t}(x)$ satisfy the relation
\ban
D X_{t}(Y_{t}(x))\cdot DY_{t}(x)=\text{Id}
\ean
or equivalently,
\ban
\Big( D X_{t}(Y_{t}(x))\Big)^{-1}=DY_{t}(x).
\ean
Second, taking into account \eqref{e:hw} we get that
\ban
h(u; w(u;x) )\equiv x,\quad u\in[0,1],
\ean
and hence
\ba
\label{e:Dw}
Dh(u;w(u;x))Dw(u;x)=\text{Id} 
\ea
or equivalently,
\ban
\Big(Dh(u;w(u;x))\Big)^{-1}=Dw(u;x),\quad u\in[0,1].
\ean
Thus the equation \eqref{e:MF} for $w$ takes the form
\ban
\begin{cases}
\displaystyle
\frac{\di }{\di u} w(u;x)&=- Dw(u;x)\phi(x,r,z),\quad u\in[0,1],\\
w(0;x)&=x.
\end{cases}
\ean
This is the first order transport equation, its solution is  given by $\eex^{-u\phi(\cdot,r,z)}(x)$ and hence 
\ba
\label{e:ww}
\eex^{\psi(\cdot,r,z)}(\cdot)=\eex^{-\phi(\cdot,r,z)}(\cdot).
\ea
Hence applying the It\^o formula to the equation \eqref{e:inve} and taking into account \eqref{e:Dw} and \eqref{e:ww} yields
\ban
\Theta(Y_t(x))&=x- \int_0^t \nabla^T \Theta(Y_{r-}(x))D Y_{r-}(x)   f(x,r)\,\di r \\
&-\int_0^t \nabla^T  \Theta(Y_{r-}(x))D Y_{r-}(x)  F(x,r)\circ \di W_r \\
&+ \int_0^t \int_{\|z\|\leq 1} \Big(\Theta(\eex^{-\phi(\cdot,r,z)}(Y_{r-})) -\Theta( Y_{r-})  \Big) \, \tilde N(\di z,\di r) \\
&+\int_0^t\int_{\|z\|\leq 1} \Big( \Theta( \eex^{-\phi(\cdot,r,z)}(Y_{r-} ) ) - \Theta( Y_{r-})\\
&\hspace{.2\textwidth}+\nabla^T \Theta(Y_{r-}(x))D Y_{r-}(x)   \phi(x ,r,z) \Big) \, \nu(\di z)\,\di r ,\\
\ean
or equivalently
\ban
\Theta(Y_t(x))&=x- \int_0^t \nabla^T \Big(\Theta\circ Y_{r-}(x)\Big)   f(x,r)\,\di r \\
&-\int_0^t \nabla^T \Big(\Theta\circ Y_{r-}(x)\Big)  F(x,r)\circ \di W_r \\
&+ \int_0^t \int_{\|z\|\leq 1} \Big(\Theta(\eex^{-\phi(\cdot,r,z)}(Y_{r-})) -\Theta( Y_{r-})  \Big) \, \tilde N(\di z,\di r) \\
&+\int_0^t\int_{\|z\|\leq 1} \Big( \Theta( \eex^{-\phi(\cdot,r,z)}(Y_{r-} ) ) - \Theta( Y_{r-})\\
&\hspace{.2\textwidth}+ \nabla^T\Big(\Theta\circ Y_{r-}(x)\Big)   \phi(x ,r,z) \Big) \, \nu(\di z)\,\di r ,\\
\ean
The latter formula can be formally written in the canonical form \eqref{e:Theta2}.
\end{proof}

\section{Proof of Theorem \ref{t:main}\label{s:proof}}

Let $W$ be a $m$-dimensional Brownian motion and $Z$ be $m$-dimensional compensated pure jump L\'evy process.
For simplicity assume that $\nu(\|z\|> 0)=0$.
Consider the linear equation \eqref{e:SPDE}.
To solve it, we consider a $(d+2)$-dimensional system of characteristics Marcus SDEs \eqref{e:phi}, \eqref{e:xi}, \eqref{e:zeta}. 
Note that $\phi$ is a $d$-dimensional process whereas $\xi$ and $\zeta$ are one-dimensional.
Denote $X:=(\phi^1,\dots,\phi^d,\xi,\zeta)=(X^1,\dots,X^d,X^{d+1},X^{d+2})\in \bR^{d+2}$ (the column vector),
and consider the functions
\begin{align}
\notag
f(X)&=-\begin{pmatrix}
      a^1(X^1,\dots, X^d)\\
      \vdots\\
      a^d(X^1,\dots, X^d)\\
      X^{d+1} b(X^1,\dots, X^d)\\
      X^{d+1} c(X^1,\dots, X^d)
     \end{pmatrix},\\
\notag
F(X)&=-\begin{pmatrix}
      A^1_1(X^1,\dots, X^d)&\cdots & A^m_1(X^1,\dots, X^d)\\
      \vdots &\ddots & \vdots\\
      A^1_d(X^1,\dots, X^d)&\cdots & A^m_d(X^1,\dots, X^d) \\
      X^{d+1} B_1(X^1,\dots, X^d)& \cdots&X^{d+1} B_m(X^1,\dots, X^d) \\
       X^{d+1} C_1(X^1,\dots, X^d)& \cdots&X^{d+1} C_m(X^1,\dots, X^d) 
     \end{pmatrix},\\
\label{e:Sigma}     
\Sigma(X)&=- \begin{pmatrix}
      \alpha^1_1(X^1,\dots, X^d)&\cdots & \alpha^m_1(X^1,\dots, X^d)\\
      \vdots &\ddots & \vdots\\
      \alpha^1_d(X^1,\dots, X^d)&\cdots & \alpha^m_d(X^1,\dots, X^d) \\
      X^{d+1} \beta_1(X^1,\dots, X^d)& \cdots&X^{d+1} \beta_m(X^1,\dots, X^d) \\
       X^{d+1} \sigma_1(X^1,\dots, X^d)& \cdots&X^{d+1} \sigma_m(X^1,\dots, X^d) 
     \end{pmatrix}     .
\end{align}
In the matrix form, the system  \eqref{e:phi}, \eqref{e:xi}, \eqref{e:zeta} reads as a canonical equation of the type \eqref{e:SDE}
\ban
X_{0,t}=X_0+\int_0^t \Phi(X_{0,r},\diamond\, \di r)
\ean
with $\phi(X,r,z)=\Sigma(X)z$ (here we allow an abuse of notation). 
There is a unique solution $X=(X_{0,t}(x,\xi_0,\zeta_0))_{t\geq 0}$ which is a $C^2$-flow on $\bR^{d+2}$. 
Denote $Y_t=X_{t,0}=X^{-1}_{0,t}= (\phi^1_{t,0},\dots,\phi^d_{t,0},\xi_{t,0},\zeta_{t,0})$ its inverse flow.

Consider a function
\ban
\Theta(x,\xi_0,\zeta_0)=\xi_0 u_0(x)+\zeta_0
\ean
and define a process
\ban
u(t;x,\xi_0,\zeta_0)&=\Theta(Y_t(x,\xi_0,\zeta_0))\\
&=\xi_{t,0}(x,\xi_0)u_0(\phi_{t,0}(x))+\zeta_{t,0}(x,\xi_0,\zeta_0),\quad x\in\bR^d,\ \xi_0,\zeta_0\in\bR.
\ean
Then by Theorem \ref{t:Theta} we get that
\ban
u(t,x,& \xi_0,\zeta_0)=\Theta(Y_t)
=\Theta(x,\xi_0,\zeta_0)-\int_0^t \nabla^T \Big(\Theta\circ Y_{r}(x,\xi_0,\zeta_0)\Big) \Phi(x,\xi_0,\zeta_0,\diamond\, \di r)\\
&=u_0(0,x,\xi_0,\zeta_0)
+\int_0^t \nabla^T_x u(r-,x,\xi_0,\zeta_0) a(x)\,\di r \\
&+ \xi_0 \int_0^t \partial_{\xi_0} u(r-,x,\xi_0,\zeta_0) b(x)\,\di r
+ \xi_0 \int_0^t \partial_{\zeta_0} u(r-,x,\xi_0,\zeta_0) c(x)\,\di r\\
&+\int_0^t \nabla^T_x u(r-,x, \xi_0,\zeta_0) A(x)\circ \di W_r 
+ \xi_0  \int_0^t \partial_{\xi_0} u(r-,x,\xi_0,\zeta_0) B(x) \circ \di W_r \\
&+ \xi_0  \int_0^t \partial_{\zeta_0} u(r-,x,\xi_0,\zeta_0) C(x) \circ \di W_r \\
&+\int_0^t \int_{\|z\|\leq 1} \Big(\Theta(\eex^{-\Sigma(\cdot) z}(Y_{r-}(x,\xi_0,\zeta_0))) - u(r-,x,\xi_0,\zeta_0)  \Big) \, \tilde N(\di z,\di r) \\
&+\int_0^t\int_{\|z\|\leq 1} \Big( \Theta( \eex^{-\Sigma(\cdot)z }(Y_{r-}(x,\xi_0,\zeta_0) ) ) - u(r-,x,\xi_0,\zeta_0) \\
&\hspace{.2\textwidth} + \nabla^T_x u(r-,x,\xi_0,\zeta_0) \alpha (x) z 
+ \xi_0 \partial_{\xi_0} u(r-,x,\xi_0,\zeta_0)\beta(x) z\\
&\hspace{.2\textwidth}+ \xi_0\partial_{\zeta_0} u(r-,x,\xi_0,\zeta_0) \sigma(x)z
\Big) \, \nu(\di z)\,\di r.
\ean
Let us study the derivatives $\partial_{\xi_0} u$ and $\partial_{\zeta_0} u$.
First we note that the process $\xi$ is found explicitly as an exponential
\ban
\xi_{0,t}&(x,\xi_0)\\
&=\xi_0 \exp\Big(-\int_0^t  b( \phi_{0,r}(x) )\,\di r
- \int_0^t  B(\phi_{0,r}(x))\circ \di W_r
-  \int_0^t\beta(\phi_{0,r}(x))\diamond \di Z_r\Big).
\ean
The derivative equals to
\ban
\partial_{\xi_0}\xi_{0,t}&(x,\xi_0)\\
&= \exp\Big(-\int_0^t  b( \phi_{0,r}(x) )\,\di r
- \int_0^t  B(\phi_{0,r}(x))\circ \di W_r
-  \int_0^t\beta(\phi_{0,r}(x))\diamond \di Z_r\Big)
\ean
and it satisfies \eqref{e:xi} with the initial values $(x,\xi_0)=(x,1)$.
By the formula of the derivative of the inverse function we get
\ba
\label{e:dy}
\partial_{\xi_0} \xi_{t,0}(x,\xi_0)&=\Big(\partial_{\xi_0} \xi_{0,t}(\phi_{t,0}(x),\xi_{t,0}(x,\xi_0))   \Big)^{-1}\\
&=\Big(\xi_{0,t}(\phi_{t,0}(x),1)   \Big)^{-1}=
\xi_{t,0}(x,1).
\ea 
Analogously,
\ba
\label{e:dz}
&\partial_{\xi_0}\zeta_{t,0}(x,\xi_0,\zeta_0)=\zeta_{t,0}(x,1,0),\\
&\partial_{\zeta_0}\zeta_{t,0}(x,\xi_0,\zeta_0)=1.
\ea
Thus taking into account \eqref{e:dy} and \eqref{e:dz} we can write
\ban
\partial_{\xi_0} u(r,x,\xi_0,\zeta_0)&=\partial_{\xi_0}\Big( \Theta(\phi_{t,0}(x),\xi_{t,0}(x,\xi_0),\zeta_{t,0}(x,\xi_0,\zeta_0))\Big)\\
&=u_0(\phi_{t,0}(x))\partial_{\xi_0} \xi_{t,0}(x,\xi_0) + \partial_{\xi_0} \zeta_{t,0}(x,\xi_0,\zeta_0)\\
&=u_0(\phi_{t,0}(x))  \xi_{t,0}(x,1) + \zeta_{t,0}(x,1,0)=u(r,x,1,0)
\ean
and 
\ban
\partial_{\zeta_0} u(r,x,\xi_0,\zeta_0)&=\partial_{\zeta_0} \Big( \Theta(\phi_{t,0}(x),\xi_{t,0}(x,\xi_0),\zeta_{t,0}(x,\xi_0,\zeta_0))\Big)\\
&=\partial_{\zeta_0} \zeta_{t,0}(x,\xi_0,\zeta_0)=1.
\ean
Inspecting the structure of the matrix function $\Sigma$ in \eqref{e:Sigma} we get that the mapping $\eex^{\Sigma(\cdot)z}\colon \bR^{d}\to\bR^{d+2}$ 
has the following form:
\ban
\eex^{\Sigma(\cdot)z}
\begin{pmatrix}
x\\ \xi_0\\\zeta_0
\end{pmatrix}
=\begin{pmatrix}
\eex^{-\alpha(\cdot)z}(x)\\ 
\displaystyle \xi_0\exp\Big(-\int_0^1 \beta(\eex^{-r\alpha(\cdot)z}(x))z\,\di r    \Big) \\
\displaystyle \zeta_0 - \xi_0\int_0^1 \exp\Big(-\int_0^s \beta(\eex^{-r\alpha(\cdot)z}(x))z\,\di r    \Big) \sigma(\eex^{-r\alpha(\cdot)z}(x) ) \,\di s
\end{pmatrix}.
\ean
Recalling \eqref{e:ww}, namely that
\ban
\eex^{\cQ z}(\Theta(\cdot))=\Theta(\eex^{-\Sigma(\cdot)z}(\cdot)),
\ean
we get the equality
\ban
&u(t,x,1,0)=u_0(x)
+\int_0^t \nabla^T u(r-,x,1,0) a(x)\,\di r 
+ \int_0^t u(r-,x,1,0) b(x)\,\di r
+ c(x)t\\
&+ \int_0^t \nabla^T u(r-,x,1,0) A(x)\circ \di W_r 
+ \int_0^t u(r-,x,1,0) B(x) \circ \di W_r 
+   C(x) W_t \\
&+\int_0^t \int_{\|z\|\leq 1} \Big(\eex^{\cQ z}(u(r-,x,1,0)) - u(r-,x,1,0)  \Big) \, \tilde N(\di z,\di r) \\
&+\int_0^t\int_{\|z\|\leq 1} \Big(\eex^{\cQ z}(u(r-,x,1,0))  - u(r-,x,1,0)
- \cQ u(r-,x,1,0) z
\Big) \, \nu(\di z)\,\di r.
\ean
This means that $u(t,x)=u(t,x,1,0)$ is the solution of \eqref{e:SPDE-2}.

To show uniqueness, 
let us first assume that $b(x)=0$, $B(x)=0$, $\beta(x)=0$, and $c(x)=0$, $C(x)=0$, $\sigma(x)=0$.
In this case, the solution defined by the characteristics has the form
\ba
\label{e:u}
u(t,x)=u_0(\phi_{t,0}(x)).
\ea
Let $v$ be another semimartingale solution of the form 
\ban
v(t,x)=u_0(x)&+\int_0^t f(x,r)\,\di r + \int_0^t F(x,r)\,\di W_r
+ \int_0^t\int_{\|z\|\leq 1}   \phi(x, r, z) \, \tilde N(\di z,\di r)
\ean
with some $f,F$ and $\phi$.
Then Theorem \ref{t:Ito2} yields that
\ban
&v(t,\phi_{0,t}(x))=
u_0(x)+
\int_0^t f(\phi_{0,r-}(x),r)\,\di r + \int_0^t F(\phi_{0,r-}(x),r)\,\di W_r\\
&+ \int_0^t\int_{\|z\|\leq 1}   \phi(\phi_{0,r-}(x), r, z) \, \tilde N(\di z,\di r) \\
&- \int_0^t \partial_x v(r-,\phi_{0,r-}(x)) a(\phi_{0,r-}(x))\,\di r 
-  \int_0^t  \partial_x v (r-,\phi_{0,r-}(x)) A(\phi_{0,r-}(x)) \circ \di W_r \\
&+\int_0^t \int_{\|z\|\leq 1} \Big(v(r-,\eex^{-\alpha}(\phi_{0,r-})) + \phi(\eex^{-\alpha}(\phi_{0,r-})),r,z)\\
&\hspace{.4\textwidth}- v(r-,\phi_{0,r-})- \phi(r-,\phi_{0,r-},r,z) \Big) \, \tilde N(\di z,\di r) \\
&+\int_0^t\int_{\|z\|\leq 1} \Big(v(r-,\eex^{-\alpha}(\phi_{0,r-})) + \phi(\eex^{-\alpha}(\phi_{0,r-})),r,z)\\
&\hspace{.5\textwidth}+ \partial_x v(r-,\phi_{0,r-}) \alpha(\phi_{0,r-})\Big) \, \nu(\di z)\,\di r,\\
&-\int_0^t\int_{\|z\|\leq 1} \Big( v(r-,\phi_{0,r-})+ \phi(r-,\phi_{0,r-},r,z)\Big) \, \nu(\di z)\,\di r,\\
\ean
where we know that $\phi$ is given by
\ban
\phi(x,r,z) = \eex^{\cQ z}(v(r-,x)) - v(r-,x) .
\ean
Let us take a closer look at the jump terms. On the one hand we know that
\ban
v(r,\eex^{\alpha(\cdot)z}(\phi_{0,r-}(x)))=\eex^{\cQ z}v(r,\phi_{0,r-}(x)).
\ean
On the other hand, inverting the sign of the jump size $z$ is equivalent to the reversion of the 
fictitious time in the Marcus ODE for $\ex^{\alpha(\cdot)z}$. Hence we obtain that
\ban
\eex^{\cQ z} (v(r,\eex^{-\alpha(\cdot)z}(\phi_{0,r-}(x))))=v(r,\phi_{0,r-}(x)).
\ean
We also see that
\ban
\partial_x v(r-,\phi_{0,r-}(x)) \alpha(\phi_{0,r-}(x)) = \cQ v(r-,\phi_{0,r-}(x)),
\ean
and thus we get
\ban
&v(t,\phi_{0,t}(x))=u_0(x)
+ \int_0^t \partial_t v(r-,\phi_{0,r-}(x)) \diamond \di \phi_{0,r}(x)\\
&- \int_0^t \partial_x v(r-,\phi_{0,r-}(x)) a(\phi_{0,r-}(x))\,\di r 
-  \int_0^t  \partial_x v (r-,\phi_{0,r-}(x)) A(\phi_{0,r-}(x)) \circ \di W_r \\
&-\int_0^t \int_{\|z\|\leq 1} \Big(\eex^{\cQ z}(v(r-,\phi_{0,r-}(x)) - v(r-,\phi_{0,r-}(x))  \Big) \, \tilde N(\di z,\di r) \\
&-\int_0^t\int_{\|z\|\leq 1} \Big(\eex^{\cQ z}(v(r-,\phi_{0,r-}(x)))  - v(r-,\phi_{0,r-}(x))
- \cQ v(r-,\phi_{0,r-}(x)) z
\Big) \, \nu(\di z)\,\di r\\
&=u_0(x)
\ean
and hence $v(t,x)$ coincides with the solution $u$ given by \eqref{e:u}.

In the presence of linear terms $b$, $B$ and $\beta$,
the process $u(t,\phi_{t,0}(x))$ given by the characteristics solution has the form
\ban
u&(t,\phi_{t,0}(x))=u_0(\phi_{t,0}(x))\times\\
&\times\exp\Big(\int_0^t  b( \phi_{0,r}(x) )\,\di r
+\int_0^t  B(\phi_{0,r}(x))\circ \di W_r
+ \int_0^t\beta(\phi_{0,r}(x))\diamond \di Z_r\Big).
\ean
and the difference $d(t,x):=v(t,\phi_{0,t}(x))- u(t,\phi_{t,0}(x))$ satisfies the linear equation
\ban
d(t,x)=\int_0^t d(r-,x) b(\phi_{r-}(x))\,\di r &+ \int_0^t d(r-,x) B(\phi_{r-}(x))\circ \di W_r \\
&+ \int_0^t d(r-,x) \beta(\phi_{r-}(x))\diamond \di Z_r,
\ean
and thus $d\equiv 0$. The same relation holds for the difference of the non-homogeneous equations.

\section{Proof of Theorem \ref{t:Ito}\label{s:gIf}}
 
To simplify the notation, we assume that $X$ and $Y$, as well as $W$ and $Z$ are one-dimensional processes, i.e.\ $d=m=1$. We also assume that 
$\nu([-1,1]^c)=0$. Adding the large jumps is straightforward.

Let us write the SDEs for $X$ and $Y$ in the It\^o form:
\ban
X_{t}(x)
&=x+\int_0^t f(X_{r-}(x),r)\,\di r +   \int_0^t F(X_{r-}(x),r)\,\di W_r\\
&+ \frac12   \int_0^t F'(X_{r-}(x),r)  F(X_{r-}(x),r) \,\di r\\
&+ \int_0^t\int_{|z|\leq 1} \Big(\eex^{\phi(\cdot,r, z)}(X_{r-}(x)) -X_{r-}(x)  \Big) \, \tilde N(\di z,\di r) \\
&+ \int_0^t\int_{|z|\leq 1} \Big(\eex^{\phi(\cdot,r ,z)}(X_{r-}(x)) -X_{r-}(x) - \phi(X_{r-}(x),r, z) \Big) \, \nu(\di z)\,\di r \\
Y_t&=y+\int_0^t g(Y_{r-},r)\,\di r
+  \int_0^t G(Y_{r-},r)\,\di W_r
+ \frac12  \int_0^t G'(Y_{r-},r)  G(Y_{r-},r)\,\di r\\
&+ \int_0^t\int_{|z|\leq 1} \Big(\eex^{\psi(\cdot,r, z)}(Y_{r-}) -Y_{r-}  \Big) \, \tilde N(\di z,\di r) \\
&+ \int_0^t\int_{|z|\leq 1} \Big(\eex^{\psi(\cdot,r, z)}(Y_{r-}) -Y_{r-} -  \psi(Y_{r-},r, z) \Big) \, \nu(\di z)\,\di r.
\ean
First we perform localization of the semimartingales $X(x)$ and $Y$.
We assume that the semimartingales $\Phi$ and $\Psi$ satisfy the assumptions in Section \ref{s:marcus} 
so that $X$ and $Y$ are well defined and sufficiently smooth.
Since the jumps of $Y$ are assumed to be bounded, for any initial value $y$ we can localise $Y$ so that the stopped process belongs to a certain ball.
Similarly, we can stop $X$ such that it is also bounded with all its derivatives up to order 2 uniformly for all values $x$ in the ball defined above. 
Therefore from now on we will work with the stopped semimartingales and also assume that all the coefficients $f$, $F$, $\phi$, $g$, $G$, $\psi$ 
as well as their derivatives have compact support.

For the proof of the generalized It\^o formula we apply the method by Carmona and Nualart \cite{carmona1990nonlinear}, Theorem III.3.3.

Consider a sequence of mollifiers $h_n\in C^\infty_c(\bR,\bR)$ given by 
$h_n(x)=n h(n x)$,
where $h\in C^\infty_c(\bR^d,\bR)$ supported on a unit ball $|x|\leq 1$, $h(x)\geq 0$, and such that $\int_{\bR} h(x)\,\di x=1$.
For the smoothing properties of mollifiers see, e.g.\ Evans \cite[Appendix C.4]{evans2002partial}.

Then for each $x\in\bR$, the classical
It\^o formula applied to the semimartingale $Y$ yields
\ba
\label{e:hY}
&h_n(Y_t-x)=h_n(y-x)+ \int_0^t h'_n(Y_{r-}-x) g(Y_{r-},r)\,\di r \\
&+ \int_0^t h'_n(Y_{r-}-x) G (Y_{r-},r)\,\di W_r\\
&+ \frac12 \int_0^t h'_n(Y_{r-} -x)  G'(Y_{r-},r)  G(Y_{r-},r)\,\di r\\  
&+\frac12  \int_0^t h''_n(Y_{r-} -x) G^2(Y_{r-} ,r)\,\di r\\
&+ \int_0^t\int_{|z|\leq 1} h'_n(Y_{r-}-x)\Big( \eex^{\psi(\cdot,r, z)}(Y_{r-})- Y_{r-}-\psi(Y_{r-},r,z) \Big) \, \nu(\di z)\,\di r \\
&+ \int_0^t\int_{|z|\leq 1} \Big[ h_n\Big(\eex^{\psi(\cdot,r, z)}(Y_{r-}) -x\Big)- h_n (Y_{r-} -x) \Big] \, \tilde N(\di z,\di r) \\
&+\int_0^t\int_{|z|\leq 1} \Big[ h_n\Big(\eex^{\psi(\cdot,r, z)}(Y_{r-}) -x\Big)- h_n (Y_{r-} -x) \\
&\hspace{.3\textwidth}- h'_n(Y_{r-} -x) 
\Big(\eex^{\psi(\cdot,r, z)}(Y_{r-}) -Y_{r-}\Big)  \Big] \, \nu(\di z)\,\di r.
\ea
Next we apply the It\^o product formula to $X(x)h(Y-x)$ to get
\ban
X_t(x) &h_n(Y_t-x)= x h_n(y-x)\\
I_1&=
\begin{cases}
\displaystyle +\int_0^t h_n(Y_{r-} -x) f(X_{r-}(x),r)\,\di r \\
\end{cases}\\
I_2&=
\begin{cases}
\displaystyle + \frac12   \int_0^t h_n(Y_{r-} -x) F'(X_{r-}(x),r)  F(X_{r-}(x),r) \,\di r \\
\end{cases}\\
I_3&=
\begin{cases}
\displaystyle 
+ \int_0^t \int_{|z|\leq 1} h_n(Y_{r-}-x)\Big(\eex^{\phi (\cdot,r, z)}(X_{r-}(x)) -X_{r-}(x) \\
\hspace{.5\textwidth} \displaystyle - \phi(X_{r-}(x),r, z) \Big) \, \nu(\di z)\,\di r
\end{cases}\\
I_4&=
\begin{cases}
\displaystyle +   \int_0^t h_n(Y_{r-} -x) F (X_{r-}(x),r)\,\di W_r \\
\end{cases}
\ean
\ba
\label{e:product}
J_1&=
\begin{cases}
\displaystyle + \int_0^t X_{r-}(x) h'_n(Y_r-x) g(Y_{r-},r)\,\di r \\
\end{cases}\\
J_2&=
\begin{cases}
\displaystyle + \frac12 \int_0^t  X_{r-}(x)
h'_n(Y_r-x)   G'(Y_{r-},r)  G(Y_{r-},r)\,\di r   \\
\end{cases}\\
J_3&=
\begin{cases}
\displaystyle +\frac12 \int_0^t X_{r-}(x)h''_n(Y_r-x) G^2(Y_{r-},r)\,\di r\\
\end{cases}\\
J_4&=
\begin{cases}
\displaystyle +\int_0^t\int_{|z|\leq 1} X_{r-}(x)\Big[ h_n\Big(\eex^{\psi(\cdot,r, z)}(Y_{r-}) -x\Big)- h_n (Y_{r-} -x) \\
\displaystyle \hspace{.3\textwidth}- h'_n(Y_{r-} -x) 
\Big(\eex^{\psi(\cdot,r, z)}(Y_{r-}) -Y_{r-}\Big)  \Big] \, \nu(\di z)\,\di r \\
\end{cases}\\
J_5&=
\begin{cases}
\displaystyle +\int_0^t\int_{|z|\leq 1} X_{r-}(x)
h'_n(Y_{r-}-x)\Big( \eex^{\psi(\cdot,r, z)}(Y_{r-})\\
\hspace{.5\textwidth}- Y_{r-}-\psi(Y_{r-},r,z) \Big)
\, \nu(\di z)\,\di r \\
\end{cases}\\
J_6&=
\begin{cases}
\displaystyle + \int_0^t X_{r-}(x)h'_n(Y_r-x) G (Y_{r-},r)\,\di W_r\\
\end{cases}\\
J_7&=
\begin{cases}
\displaystyle + \frac12\int_0^t  F(X_{r-}(x),r) h'_n(Y_r-x) G (Y_{r-},r)\,\di r\\
\end{cases}\\
J_8&=
\begin{cases}
\displaystyle +  
\int_0^t  \int_{|z|\leq 1} \Big[\eex^{\phi(\cdot,r, z)}(X_{r-}(x)) -X_{r-}(x)\Big]\times\\
\hspace{.25\textwidth}\displaystyle \times\Big[ h_n\big(\eex^{\psi(\cdot,r, z)}(Y_{r-}) -x\big)-  h_n(Y_{r-}-x)\Big]\, \nu(\di z)\,\di r\\
\end{cases}\\
K_1&=
\begin{cases}
\displaystyle   + \int_0^t  \int_{|z|\leq 1}  \Big[ \eex^{\phi(\cdot,r, z)}(X_{r-}(x)) h_n\Big(\eex^{\psi(\cdot,r, z)}(Y_{r-}) -x\Big)\\
\displaystyle \hspace{.45\textwidth}- X_{r-}(x)h_n (Y_{r-} -x) \Big] \, \tilde N(\di z,\di r).
\end{cases}
\ea
We decompose the term $K_1$ further into the sum
\ban
I_5&=
\begin{cases}
\displaystyle   \int_0^t  \int_{|z|\leq 1} \Big(\eex^{\phi(\cdot,r, z)}(X_{r-}(x)) - X_{r-}(x)\Big)h_n (Y_{r-} -x) 
 \, \tilde N(\di z,\di r)\\
 \end{cases}\\
J_9&=
\begin{cases}
+\displaystyle \int_0^t  \int_{|z|\leq 1}  \eex^{\phi(\cdot,r, z)}(X_{r-}(x))\times\\
 \displaystyle\hspace{.25\textwidth}  \times\Big[ h_n\Big(\eex^{\psi(\cdot,r, z)}(Y_{r-}) -x\Big)
- h_n (Y_{r-} -x) \Big] \, \tilde N(\di z,\di r).
\end{cases}
\ean
All the (stochastic) integrals exist due to the integrability assumptions on the functions $\phi$ and $\psi$ and the properties of the exponential mappings.

The proof of the generalized It\^o formula will consist in integration of the equality \eqref{e:product} w.r.t.\ $x$ and passing to the limit
as $n\to\infty$.

We distinguish between the Lebesgue integrals w.r.t.\ $\di r$, the It\^o integrals w.r.t.\ $\di W$ and 
the compensated Poissonian random measure $\tilde N$, and the terms containing the derivatives $h'_n$ and $h''_n$.
 
We start with the terms coming from the integral $\int_0^t h_n(Y_{r-}-x)\,\di X_r$ which will give us the first 
integral in \eqref{e:ito-gen}.

\medskip
\noindent
\textbf{Initial and end points.} It follows from the properties of the mollifiers and the continuity of $x\mapsto X_t(x)$ that
\ban
\lim_{n\to\infty}\int_\bR X_t(x) &h_n(Y_t-x)\,\di x=X_t(Y_t) \quad\text{and}\quad
\lim_{n\to\infty}\int_\bR x &h_n(y-x)\,\di x=y. 
\ean

\medskip
\noindent
\textbf{Terms $I_1$ and $I_2$.} We start with the Lebesgue integrals coming from the drift part and 
the noise-induced drift appearing in the Stratonovich 
integrals w.r.t.\ $W$. For each $\omega\in \Omega$, 
\ban
\int_0^t \int_{\bR}|h_n(Y_{r-}-x)  f(X_{r-}(x),r)|\,\di x\,\di r  
&\leq \|f\|\int_0^t \int_{\bR}h_n(Y_{r-}-x)  \,\di x\,\di r\\
&=T\cdot \|f\|<\infty
\ean
and 
the Fubini theorem yields 
\ban
\int_{\bR}\int_0^t h_n(Y_{r-}-x) & f(X_{r-}(x),r)\,\di r  \,\di x
=\int_0^t  \int_{\bR^d}h_n(Y_{r-}-x)  f(X_{r-}(x),r) \,\di x\,\di r\\
&=\int_0^t  \int_{\|x\|\leq 1/n}h_n(x)  f(X_{r-}(Y_{r-}-x),r) \,\di x\,\di r.
\ean
For each $r\in[0,T]$, the function $y\mapsto f(X_{r-}(y),r)$ is continuous and for any $K>0$ by Theorem 6(iii) in Evans 
\cite[Appendix C.4]{evans2002partial}

\ban
\lim_{n\to\infty}\sup_{|y|\leq K}\Big|\int_{|x|\leq 1/n} h_n(x)  f(X_{r-}(y-x),r) \,\di x - f(X_{r-}(y),r)\Big|=0.
\ean
Since for $r\in[0,T]$
\ban
 \Big|\int_{|x|\leq 1/n} h_n(x)  f(X_{r-}(Y_{r-}-x),r) \,\di x \Big|\leq \|f\|
\ean
the Lebesgue theorem implies that
\ban
\int_0^t\int_{\bR} &h_n(Y_{r-}-x)  f(X_{r-}(x),r) \,\di x\,\di r  \to \int_0^t f(X_{r-}(Y_{r-}),r)\,\di r .
\ean
Analogously we get the convergence of the term $I_2$.

\medskip
\noindent
\textbf{Term $I_3$.} 
Consider the function
\ban
H^{I_3}(x,r)=\int_{|z|\leq 1} \Big(\eex^{\phi (\cdot,r, z)}(x) -x - \phi(x,r, z) \Big) \, \nu(\di z). \\
\ean
Since
\ban
|\eex^{\phi (\cdot,r, z)}(x) -x - \phi(x,r, z)|\leq K_0(z)L_0(z) \ex^{K_0(z)}(1+|x|)
\ean
we get that $x\mapsto H^{I_3}(x,r)$ is well-defined and continuous. Recalling that $X$ and $Y$ are assumed to be bounded, 
the argument of the previous step applies and
\ban
\int_0^t  & h_n(Y_{r-}-x)\int_{|z|\leq 1}\Big(\eex^{\phi (\cdot,r, z)}(X_{r-}(x)) -X_{r-}(x) - \phi(X_{r-}(x),r, z) \Big) \, \nu(\di z)\,\di r\\
&\to \int_0^t \int_{|z|\leq 1}\Big(\eex^{\phi (\cdot,r, z)}(X_{r-}(Y_{r-})) -X_{r-}(Y_{r-}) - \phi(X_{r-}(Y_{r-}),r, z) \Big) \, \nu(\di z)\,\di r.
\ean

\medskip
\noindent
\textbf{Term $I_4$.} 
By Fubini's theorem for stochastic integrals, see Protter \cite[Theorem 6.64]{Protter-04}, for each $|y|\leq K$ 
\ban
\int_{\bR}\int_0^t h_n(y-x) F(X_{r-}(x),r)\,\di W_r\,\di x= \int_0^t \int_{\bR}h_n(y-x) F(X_{r-}(x),r)\,\di x\,\di W_r.
\ean
By the properties of mollifiers, see Evans \cite[Appendix C.4, Theorem 6]{evans2002partial}, for each $r\in[0,T]$ and $|y|\leq K$
\ban
\Big|\int_{\bR}h_n(y-x) F(X_{r-}(x),r)\,\di x  - F(X_{r-}(y),r)\Big|\leq 2\|F\|
\ean
and by the It\^o isometry and the Lebesgue theorem
\ban
\E\Big| &\int_0^t \int_{\bR}h_n(Y_{r-}-x) F(X_{r-}(x),r)\,\di x\,\di W_r - 
\int_0^t F(X_{r-}(Y_{r-}),r)\,\di W_r \Big|^2\\
&=\int_0^t \E \Big|\int_{\bR}h_n(Y_{r-}-x) F(X_{r-}(x),r)\,\di x  - F(X_{r-}(Y_{r-}),r)\Big|^2\,\di r \to 0.
\ean

\medskip
\noindent
\textbf{Term $I_5$.}
The jump term $I_5$ is estimated analogously with the help of the It\^o isometry for stochastic integrals w.r.t.\ a compensated 
Poisson random measure.

\medskip
\noindent
\textbf{Terms $J_1$, $J_2$ and $J_7$.} Consider the Lebesgue integral $J_1$. We apply Fubini's theorem for each $\omega$, integrate by parts, 
use that $X$ is a $C^1$-diffeomorphism, and apply Lebesgue's theorem:
\ban
\int_{\bR}\int_0^t X_{r-}(x)h'_n(Y_{r-}-x) &g (Y_{r-},r)\,\di r \,\di x\\
&=\int_0^t\Big[ \int_{\bR}  X_{r-}(x) h'_n(Y_{r-}-x)\,\di x\Big]\, g(Y_{r-},r)\di r \\
&=\int_0^t \Big[\int_{\bR} X'_{r-}(x)h_n(Y_{r-}-x)\,\di x\Big] g(Y_{r-},r)\,\di r \\
&\to  \int_0^t X'_{r-}(Y_{r-})  g(Y_{r-},r)\,\di r .
\ean
The terms $J_2$ and $J_7$ are treated analogously.

\noindent
\textbf{Term $J_3$.} Analogously, using that $X$ is a $C^2$-diffeomorphism, and applying integration by parts twice we get 
\ban
\int_\bR\int_0^t X_{r-}(x)h''_n(Y_{r-}-x)& G^2(Y_{r-},r)\,\di r\,\di x\\
&=\int_0^t \Big[ \int_\bR  X_{r-}(x)h''_n(Y_{r-}-x)  \,\di x\Big]\,G^2(Y_{r-},r)\,\di r\\
&=\int_0^t \Big[ \int_\bR X''_{r-}(x)h_n(Y_{r-}-x)  \,\di x\Big]\,G^2(Y_{r-},r)\,\di r\\
&\to \int_0^t X''_{r-}(Y_{r-}) G^2(Y_{r-},r)\,\di r.
\ean

\noindent
\textbf{Terms $J_4+J_5+J_8$.} First we note that due to the assumptions, the sum of the terms $J_4$, $J_5$ and $J_8$ is well defined.
Hence the sum $J_4+J_5+J_8$ simplifies to
\ban
J_4+J_5+J_8
=\int_0^t\int_{|z|\leq 1} &\Big[\eex^{\phi(\cdot,r, z)}(X_{r-}(x)) 
\Big( h_n\big(\eex^{\psi(\cdot,r, z)}(Y_{r-}) -x\big)-  h_n(Y_{r-}-x)\Big)\\
&-X_{r-}(x) \Big(\eex^{\psi(\cdot,r, z)}(Y_{r-}) -Y_{r-}\Big) h'_n(Y_{r-} -x) \Big] \, \nu(\di z)\,\di r
\ean
and by Fubini's theorem, integration by parts and by the dominated convergence theorem we get
\ban
\lim_{n\to\infty}&\int_\bR (J_4+J_5+J_8)\,\di x\\
&= \int_0^t \int_{|z|\leq 1}  
\Big[ \eex^{\phi (\cdot,r, z)}\Big(X_{r-}\Big(\eex^{\psi(\cdot,r, z)}(Y_{r-})\Big)\Big) - \eex^{\phi(\cdot,r, z)}(X_{r-}(Y_{r-}))\\
&\hspace{0.45\textwidth}- X'_{r-}(Y_{r-}) \psi(Y_{r-},r, z) \Big]
\,  \nu(\di z)\,\di r.
\ean

\medskip
\noindent
\textbf{Term $J_6$.} 
The term $J_6$ is treated with the help of integration by parts analogously to the term $I_4$.

\medskip
\noindent
\textbf{Term $J_9$.} 
The term $J_9$ is treated analogously to the term $I_5$.

\section{Proof of Theorem \ref{t:Ito2}\label{s:ito2proof}}

The proof of this Theorem is analogous. After localization, and application of a mollifier $h_n$ to $Y$ we obtain again the formula \eqref{e:hY}.
The product formula for $\Phi(x,t)h_n(Y_t-x)$ takes the form
\ban
&\Phi(x,t) h_n(Y_t-x)= \Phi(x,0)h_n(y-x)
+\int_0^t h_n(Y_{r-} -x) f(X_{r-}(x),r)\,\di r \\
&  +   \int_0^t h_n(Y_{r-} -x) F (X_{r-}(x),r)\,\di W_r 
+ \int_0^t \Phi(x,r-) h'_n(Y_{r-}-x) g(Y_{r-},r)\,\di r \\
& + \frac12 \int_0^t  \Phi(x,r-) h'_n(Y_{r-}-x)   G'(Y_{r-},r)  G(Y_{r-},r)\,\di r   \\
&  +\frac12 \int_0^t \Phi(x,r-)h''_n(Y_{r-}-x) G^2(Y_{r-},r)\,\di r\\
&  +\int_0^t\int_{|z|\leq 1} \Phi(x,r-)\Big[ h_n\Big(\eex^{\psi(\cdot,r, z)}(Y_{r-}) -x\Big)- h_n (Y_{r-} -x) \\
&\hspace{0.35\textwidth}- h'_n(Y_{r-} -x) 
\Big(\eex^{\psi(\cdot,r, z)}(Y_{r-}) -Y_{r-}\Big)  \Big] \, \nu(\di z)\,\di r \\
&   +\int_0^t\int_{|z|\leq 1} \Phi(x,r-) h'_n(Y_{r-}-x)\Big( \eex^{\psi(\cdot,r, z)}(Y_{r-})- Y_{r-}-\psi(Y_{r-},r,z) \Big)
\, \nu(\di z)\,\di r \\
&  + \int_0^t \Phi(x,r-)h'_n(Y_r-x) G (Y_{r-},r)\,\di W_r\\
&  +\frac 12 \int_0^t  F(X_{r-}(x),r) h'_n(Y_r-x) G (Y_{r-},r)\,\di r\\
\ean
\ba
\label{e:product2}
&  +  \int_0^t  \int_{|z|\leq 1} \phi(x,r,z)
\Big( h_n\big(\eex^{\psi(\cdot,r, z)}(Y_{r-}) -x\big)-  h_n(Y_{r-}-x)\Big)\, \nu(\di z)\,\di r\\
&  + \int_0^t  \int_{|z|\leq 1}  \Big[ \Big( \Phi(x,r-)+\phi(x,r,z)\Big) h_n\Big(\eex^{\psi(\cdot,r, z)}(Y_{r-}) -x\Big)\\
&\hspace{0.5\textwidth}-  \Phi(x,r-) h_n (Y_{r-} -x) \Big] \, \tilde N(\di z,\di r).
\ea
Integrating w.r.t.\ $x$ and passing to the limit and $n\to\infty$ get the formula
\ban
&\Phi(Y_t,t)= \Phi(y,0)
+\int_0^t f( Y_{r-},r)\,\di r  +   \int_0^t F ( Y_{r-},r)\,\di W_r\\
&+ \int_0^t \nabla^T \Phi(Y_{r-},r-) g(Y_{r-},r)\,\di r  +\frac12 \int_0^t \nabla^T \Phi(Y_{r-} ,r-)  G^2(Y_{r-},r)\,\di r\\
&+\frac12 \int_0^t  \nabla^T\nabla \Phi(Y_{r-},r-)   G'(Y_{r-},r)  G(Y_{r-},r)\,\di r   \\
&  +\int_0^t\int_{|z|\leq 1} \Big( \Phi(\eex^{\psi(\cdot,r, z)}(Y_{r-}),r-))- \Phi(Y_{r-},r-)\\
&\hspace{0.5\textwidth}-\nabla^T \Phi(Y_{r-},r-)  \psi(Y_{r-},r,z)
\Big) \, \nu(\di z)\,\di r \\
&  + \int_0^t  \nabla^T \Phi(Y_{r-},r-)   G (Y_{r-},r)\,\di W_r 
 + \frac12 \int_0^t  \nabla^T F(Y_{r-},r) G (Y_{r-},r)\,\di r\\
& +  \int_0^t  \int_{|z|\leq 1} \Big(\phi(\eex^{\psi(\cdot,r, z)}(Y_{r-}),r,z)
-  \phi(Y_{r-},r,z)\Big)\, \nu(\di z)\,\di r\\
&  + \int_0^t  \int_{|z|\leq 1}  \Big[  \Phi(\eex^{\psi(\cdot,r, z)}(Y_{r-}),r-)+\phi(\eex^{\psi(\cdot,r, z)}(Y_{r-}),r,z)
-  \Phi(Y_{r-},r-) \Big] \, \tilde N(\di z,\di r).
\ean
which can be transformed to \eqref{e:Phinabla}, \eqref{e:Phinabla1}, \eqref{e:Phinabla2}.
%
%
%

\end{document}